\documentclass[11pt,twoside,a4paper]{article}
     
\usepackage{amssymb}
\usepackage{amsmath}
\usepackage{amsthm, color}

\allowdisplaybreaks

\usepackage{amsfonts}
\usepackage{bbm}
\usepackage{verbatim}
\usepackage{url}
\usepackage[T1]{fontenc}
\usepackage{mathrsfs}

\usepackage{hyperref}
\hypersetup{
    colorlinks=true,
    linkcolor=blue,
    citecolor=blue,
    filecolor=magenta,      
    urlcolor=cyan,
}

%\usepackage[
%backend=biber,
%style=alphabetic,
%maxbibnames=10,
%sorting=nyt
%]{%biblatex}%%chongweistillhere

%%%%CHONGWEI:-----putitback---\addbibresource{ref.bib}b

\pagestyle{myheadings}\markboth{}
{Lacunary $\delta$-Discretised Spherical Maximal Operators}

\textwidth =158mm
\textheight =230mm
\oddsidemargin 2mm
\evensidemargin 2mm
\headheight=13pt
\setlength{\topmargin}{-0.6cm}

\DeclareUnicodeCharacter{0301}{\"u}

\parindent=13pt

\newcommand{\R}{\bbR}

\newcommand{\A}{\mathcal A}

\newcommand{\D}{\mathcal D}

\newcommand{\RR}{\mathcal R}

\newcommand{\Be}{\begin{equation}}
\newcommand{\Ee}{\end{equation}}

\newcommand{\Bm}{\begin{multline}}
\newcommand{\Em}{\end{multline}}
\newcommand{\Bea}{\begin{eqnarray}}
\newcommand{\Eea}{\end{eqnarray}}
\newcommand{\Beas}{\begin{eqnarray*}}
\newcommand{\Eeas}{\end{eqnarray*}}
\newcommand{\Benu}{\begin{enumerate}}
\newcommand{\Eenu}{\end{enumerate}}
\newcommand{\Bi}{\begin{itemize}}
\newcommand{\Ei}{\end{itemize}}

\def\intslash{\rlap{\kern  .32em $\mspace {.5mu}\backslash$ }\int}
\def\qsl{{\rlap{\kern  .32em $\mspace {.5mu}\backslash$ }\int_{Q_x}}}

\def\emph#1{{\it #1 }}

\def\Rn{{\mathbb R^n}}
\def\Rd{{\mathbb R^d}}

\def\supp{{\text{\rm supp}}}

%Greek letters

%Attn:\fi can't be defined.

%roman letters with a tilde

%roman letters with a bar
%\def\abar{{\bar a}} \def\Abar{{\bar A}}
%\def\bbar{{\bar b}} \def\Bbar{{\bar B}}
%\def\cbar{{\bar c}} \def\Cbar{{\bar C}}
%\def\dbar{{\bar d}} \def\Dbar{{\bar D}}
%\def\ebar{{\bar e}} \def\Ebar{{\bar E}}
%\def\fbar{{\bar f}} \def\Fbar{{\bar F}}
%\def\gbar{{\bar g}} \def\Gbar{{\bar G}}
%\def\hBar{{\bar h}} \def\Hbar{{\bar H}}
%\def\ibar{{\bar i}} \def\Ibar{{\bar I}}
%\def\jbar{{\bar j}} \def\Jbar{{\bar J}}
%\def\kbar{{\bar k}} \def\Kbar{{\bar K}}
%\def\lbar{{\bar l}} \def\Lbar{{\bar L}}
%\def\mbar{{\bar m}} \def\Mbar{{\bar M}}
%\def\nbar{{\bar n}} \def\Nbar{{\bar N}}
%\def\obar{{\bar o}} \def\Obar{{\bar O}}
%\def\pbar{{\bar p}} \def\Pbar{{\bar P}}
%\def\qbar{{\bar q}} \def\Qbar{{\bar Q}}
%\def\rbar{{\bar r}} \def\Rbar{{\bar R}}
%\def\sbar{{\bar s}} \def\Sbar{{\bar S}}
%\def\tbar{{\bar t}} \def\Tbar{{\bar T}}
%\def\ubar{{\bar u}} \def\Ubar{{\bar U}}
%\def\vbar{{\bar v}} \def\Vbar{{\bar V}}
%\def\wbar{{\bar w}} \def\Wbar{{\bar W}}
%\def\xbar{{\bar x}} \def\Xbar{{\bar X}}
%\def\ybar{{\bar y}} \def\Ybar{{\bar Y}}
%\def\zbar{{\bar z}}  \def\Zbar{{\bar Z}}

%roman letters with a hat

\def\R{{\hbox{\bf R}}}

 at 10 true pt

\newcommand{\G}{\Rd}

\def\be#1{\begin{equation}\label{ #1}}
\def\endeq{\end{equation}}
\def\endal{\end{align}}
\def\bas{\begin{align*}}
\def\eas{\end{align*}}
\def\bi{\begin{itemize}}
\def\ei{\end{itemize}}

\def\Rd{{\hbox{\bf R}_d}}

\def\emph#1{{\it #1}}
\def\textbf#1{{\bf #1}}

 \begin{document}

\newtheorem{theorem}{Theorem}[section]
\newtheorem{prop}[theorem]{Proposition}
\newtheorem{lemma}[theorem]{Lemma}
\newtheorem{definition}[theorem]{Definition}
\newtheorem{corollary}[theorem]{Corollary}
\newtheorem{example}[theorem]{Example}
\newtheorem{remark}[theorem]{Remark}
\newcommand{\ra}{\rightarrow}
\renewcommand{\theequation}
{\thesection.\arabic{equation}}
\newcommand{\ccc}{{\cal C}}

\allowdisplaybreaks

\def\HAL { H^1_{{at}, L_1, L_2, M}({\Bbb R}^{n_1}\times{\Bbb R}^{n_2})}
 \def\HSL { H^1_{L_1, L_2, S}({\Bbb R}^{n_1}\times{\Bbb R}^{n_2}) }
\def\HL  {   H^1_{L_1, L_2}({\Bbb R}^{n_1}\times{\Bbb R}^{n_2}) }
\def\HLD  {  H^{\rm 1}_{L} \cap L^2   }
\def\RR{\mathbb R}
\def\Rn{{\mathbb R}^n}
\def\Rm{{\mathbb R}^m}
\def\Rd{{\mathbb R}^d}

 \medskip

\arraycolsep=1pt

\title{\Large\bf   Lacunary $\delta$-Discretised Spherical Maximal Operators}
\author{Surjeet Singh Choudhary, Ji Li, Chong-Wei Liang, Chun-Yen Shen}

%\medskip
\date{}
\maketitle

\begin{center}
\begin{minipage}{13.5cm}\small

{\noindent  {\bf Abstract:}\  
We study the lacunary analogue of the $\delta$-discretised spherical maximal operators introduced by Hickman and Jan\v{c}ar, for $\delta \in (0, 1/2)$, and establish the boundedness on $L^p$ for all $1 < p < \infty$, along with the endpoint weak-type estimate $H^1 \to L^{1,\infty}$. We also prove the corresponding $L^p$ boundedness for the multi-parameter variant. The constants in these bounds are uniform in $\delta$, and thus, by taking the limit $\delta \to 0^+$, our results recover the classical boundedness of the lacunary spherical maximal function.
}

\end{minipage}
\end{center}

\footnotetext {Keywords: Lacunary Maximal Function, Multi-parameter Littlewood--Paley Decomposition}

\footnotetext{{Mathematics Subject Classification 2020:} {42B30, 42B20, 42B35}}

\section{Introduction}
\label{s:1}
\setcounter{equation}{0}

As shown by Duoandikoetxea and Rubio de Francia \cite{DR},  
$L^p(\Rd)$ boundedness of the lacunary maximal function $M_{lac}(f)(x):=\sup_{k\in\mathbb Z} |f*\mu_k(x)|$ holds in the range $1<p<\infty$ if the Fourier transform $\hat \mu$ of the compactly supported finite Borel measure $\mu$ satisfies the decay condition 
$\hat \mu(\xi) = O(|\xi|^{-\epsilon})$ for some $\epsilon>0$, where $\mu_k$ denotes the dyadic dilates defined by $\langle \mu_k,f(\cdot)\rangle = \langle \mu, {f(2^{k}\cdot)}\rangle$.  
The endpoint estimate for \( p = 1 \) has also attracted significant interest. In the Euclidean setting, Christ \cite{Chr} demonstrated that the lacunary spherical maximal function is bounded from \( H^1(\Rd) \) to \( L^{1,\infty}(\Rd) \). 

This line of research has been further developed in various directions. For example, Oberlin \cite{Ober} and Heo \cite{Heo} extended Christ’s result to more general measures under a Fourier decay condition on \( \widehat{\mu} \). Seeger and Wright \cite{MR2918096} proved the \( H^1 \to L^{1,\infty} \) boundedness under a specific regularity condition on the convolution operator \( f \mapsto f \ast \mu \). J.~Lee, S.~Lee and Oh \cite{LLO}, as well as Hickman and Zahl \cite{HZ}, further studied the $L^p$ boundedness of the strong spherical maximal operator associated with multi-parameter dilations. See also recent developments on the homogeneous group setting \cite{MR4523247,MR4693933,GHW,GT}.

Recently, Hickman and Jan\v{c}ar \cite{HJ} introduced the $\delta$-discretised spherical maximal operators. 
Given $0<\delta<1/2$, $x\in\Rd$ and $1\leq r\leq2$, let $C(x,r)$ be the sphere in $\Rd$ centred at $x$ with radius $r$, and define
\begin{align*}
C^\delta(x,r) := \{y\in \Rd: |y-z|<\delta,\, z\in C(x,r) \},
\end{align*}
i.e., the $\delta$-neighbourhood of $C(x,r)$, forming an annulus with outer radius $r$ and thickness $\delta$.
They introduced
\[
M^\delta(f)(x) := \sup_{1\leq r\leq2} \frac{1}{| C^\delta(x,r)| } \int_{C^\delta(x,r)} |f(y)|\,dy,
\qquad x\in\Rd,
\]
and proved that for all $d\geq2$ and $p\geq p_d:= {d\over d-1}$, there exists a constant 
$C_{d,p}\geq1$ such that 
\[
\|M^\delta(f)\|_{L^p(\Rd)}\leq C_{d,p} (\log \delta^{-1})^{p_d/p} \|f\|_{L^p(\Rd)}
\]
holds for all $f\in L^p(\Rd)$ and all $0<\delta<1/2$.
Their proof relies on a geometric approach that avoids the use of the Fourier transform. 

The $\delta$-neighbourhood variant of various maximal operator arises naturally in analysis and geometric measure theory \cite{MR3617376}. We refer the reader to foundational works by Córdoba \cite{Cor}, 
Schlag \cite{Sch1,Sch2}, and Wolff \cite{Wol}, as well as the recent result by Chang, Dosidis, and Kim \cite{CDK}. (see also closely related references  \cite{CMSGCIN, MR4800578}).

Motivated by this direction and the early development of the spherical mean (\cite{MR0420116,MR0537803,MR0874045, MR1448717}), we consider the lacunary version of the $\delta$-discretised spherical maximal operator and its multi-parameter counterpart. 
We establish $L^p$-boundedness results for these operators, and in the one-parameter case, we also prove the endpoint estimate from $H^1$ to $L^{1,\infty}$. 

Importantly, we keep track of the parameter $\delta \in(0,1)$ and show that the bounds we obtain are independent of $\delta$. 

We now present our results. Throughout the whole paper, we 
consider $\Rd$ with $d\geq2$ and $\mu_k=\sigma_k$ denotes dyadic dilation of the surface measure on the unit sphere $\mathbb{S}^{d-1}$.

\begin{definition}\label{def1}
Suppose $0<\delta<1/2$. Define the lacunary $\delta$-discretised spherical maximal operators by
\begin{align}
M_{lac}^\delta(f)(x):= \sup_{k\in\mathbb Z}  |f *_\delta \sigma_  k  (x)|,
\qquad x\in\Rd,
\end{align}
where 
\begin{align}
f *_\delta \sigma_  k  (x) = {1\over | C^\delta(0,1) | }
\int_{C^\delta(0,1) } f(x-2^ky) dy.
\end{align}
Here  $2^ky = (2^ky_1,\ldots, 2^ky_d)$.

\end{definition}

We then have the following estimates.
\begin{theorem}\label{thm0}
Suppose $0<\delta<1/2$. Then the following mapping properties hold:

(1) $ M_{lac}^\delta(f)$ is bounded on $L^p(\Rd)$ for all $1<p<\infty$ with
$$ \| M_{lac}^\delta(f)\|_{L^p(\Rd)} \lesssim \|f\|_{L^p(\Rd)}.$$

(2) $ M_{lac}^\delta(f)$ is bounded from 
$H^1(\Rd)$ to $L^{1,\infty}(\Rd)$: for all $\lambda>0$,
\begin{align*}
|\{x\in\Rd: M_{lac}^\delta(f)(x)>\lambda\}|\lesssim {1\over\lambda} \|f\|_{H^1(\Rd)}.
\end{align*}
Here the implicit constants in the inequalities in (1) and (2) are independent of $\delta$.

\end{theorem}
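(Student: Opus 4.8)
The plan is to reduce everything to a single decay estimate for the Fourier transform of the normalised averaging kernel that is \emph{uniform in $\delta$}, and then to run the classical Littlewood--Paley machinery on top of it. Write $\nu^\delta:=|C^\delta(0,1)|^{-1}\mathbf 1_{C^\delta(0,1)}$, so that $f*_\delta\sigma_k=f*(\nu^\delta)_k$, where $(\nu^\delta)_k$ is the $L^1$-normalised dyadic dilate: a probability density supported on the annulus $\{2^k(1-\delta)<|z|<2^k(1+\delta)\}$. The key lemma I would prove first is: there is $C=C_d$ such that $|\widehat{\nu^\delta}(\xi)|\le C(1+|\xi|)^{-(d-1)/2}$, $|\widehat{\nu^\delta}(\xi)-1|\le C|\xi|$, and more generally $|\partial_\xi^\alpha\widehat{\nu^\delta}(\xi)|\le C_\alpha(1+|\xi|)^{-(d-1)/2}$, for all $\xi\in\Rd$ and all $0<\delta<1/2$.

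To prove the lemma, pass to polar coordinates and write $\widehat{\nu^\delta}(\xi)=|C^\delta(0,1)|^{-1}\int_{1-\delta}^{1+\delta}\widehat{d\sigma}(r\xi)\,r^{d-1}\,dr$, where $\widehat{d\sigma}$ is the Fourier transform of the surface measure on $\mathbb S^{d-1}$, which obeys $|\widehat{d\sigma}(\eta)|\lesssim(1+|\eta|)^{-(d-1)/2}$ and the oscillatory expansion $\widehat{d\sigma}(\eta)=\sum_{\pm}e^{\pm2\pi i|\eta|}a_\pm(|\eta|)$ with $|a_\pm^{(m)}(s)|\lesssim_m(1+s)^{-(d-1)/2-m}$. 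Since $|C^\delta(0,1)|\sim\delta$ and $r\sim1$ on the interval of integration, for $|\xi|\le\delta^{-1}$ one reads off directly $|\widehat{\nu^\delta}(\xi)|\lesssim\delta^{-1}\cdot\delta\cdot(1+|\xi|)^{-(d-1)/2}$, while for $|\xi|>\delta^{-1}$ one inserts the oscillatory expansion and integrates by parts once in $r$ (the phase $r\mapsto r|\xi|$ having derivative $|\xi|$), producing an extra factor $\delta^{-1}|\xi|^{-1}<1$. The bound $|\widehat{\nu^\delta}(\xi)-1|\le C|\xi|$ is immediate from $\widehat{\nu^\delta}(0)=1$, $|e^{-2\pi i\xi\cdot y}-1|\le2\pi|\xi||y|$, and $|y|\le2$ on $\supp\nu^\delta$; the derivative bounds follow by applying the same argument to the measures $y^\alpha\nu^\delta$, which are of the same type. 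None of the constants sees $\delta$, and letting $\delta\to0^+$ returns $\widehat{d\sigma}$ --- this is the mechanism behind the ``classical limit'' in the abstract.

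With the lemma in hand, part (1) follows the argument of \cite{DR}: split $\nu^\delta=\nu^\delta*\phi+(\nu^\delta-\nu^\delta*\phi)$ for a fixed Schwartz bump $\phi$ with $\widehat\phi\equiv1$ near $0$; then $\nu^\delta*\phi$ has all Schwartz seminorms bounded uniformly in $\delta$ (it is $\|\nu^\delta\|_1=1$ times a fixed Schwartz kernel, up to rapidly decaying tails), so $\sup_k|f*(\nu^\delta*\phi)_k|\lesssim M_{HL}f$ with a $\delta$-free constant, while $\sup_k|f*(\nu^\delta-\nu^\delta*\phi)_k|$ is dominated by the square function $\bigl(\sum_k|f*(\nu^\delta-\nu^\delta*\phi)_k|^2\bigr)^{1/2}$, whose $L^2$ bound comes straight from the lemma (for each frequency only $O(1)$ dilates contribute, with total gain summable since $d\ge2$) and whose $L^p$ bound for $1<p<\infty$ follows by the standard vector-valued Littlewood--Paley/interpolation argument (further frequency-decompose the kernel, use the exponential $L^2$-gain against a polynomial $L^p$-loss coming from the Fefferman--Stein maximal inequality, and interpolate); the only inputs are the lemma together with $\|\nu^\delta\|_1=1$ and $\supp\nu^\delta\subseteq\overline{B(0,2)}$, all $\delta$-uniform. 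For part (2), use the maximal-function characterisation of $H^1$ and a Calderón--Zygmund decomposition of $f$ at height $\lambda$, $f=g+\sum_jb_j$, with $\|g\|_2^2\lesssim\lambda\|f\|_{H^1}$ and each $b_j$ supported on a cube $Q_j$ of side $\rho_j$, $\int b_j=0$, $\|b_j\|_\infty\lesssim\lambda$, $\sum_j|Q_j|\lesssim\lambda^{-1}\|f\|_{H^1}$; the good part $g$ is controlled by Chebyshev and the $L^2$ bound from (1), and off $\bigcup_jCQ_j$ the bad part is handled via two facts, both uniform in $\delta$: for $x\notin CQ_j$ the dilates $(\nu^\delta)_k$ whose support meets $x-Q_j$ all have $2^k\sim|x-x_j|$ (so only $O(1)$ scales are active per pair $(x,j)$ and the annular shells about distinct $Q_j$ have bounded overlap), and on the relevant shell $|b_j*_\delta\sigma_k(x)|\lesssim\|b_j\|_1\,\min(\rho_j,2^k\delta)\,\rho_j^{-1}(2^k)^{-d}\delta^{-1}$, whose product with the shell volume $\sim(2^k)^{d-1}(\rho_j+2^k\delta)$ is $\sim1$; combining these with $\|b_j*_\delta\sigma_k\|_1\le\|b_j\|_1$ and organising the sum over scales and over cubes of a given size (invoking the $L^2$ bound where the pointwise bound is too weak, as in Christ's treatment \cite{Chr} of the classical lacunary spherical maximal function) covers $\{M_{lac}^\delta b>\lambda\}\setminus\bigcup_jCQ_j$ by shells of total measure $\lesssim\lambda^{-1}\|f\|_{H^1}$.

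The main obstacle is precisely the uniformity in $\delta$. Every crude physical-space estimate for $(\nu^\delta)_k$ degenerates as $\delta\to0^+$ --- the kernel's gradient has $L^1$-norm $\sim\delta^{-1}$, and there is no radial decreasing $L^1$-majorant of $\delta$-uniform mass --- so the $L^p$ theory must be driven entirely from the Fourier side, where the decay in the lemma is genuinely $\delta$-independent, and the endpoint argument must avoid using smoothness of the averaging kernel altogether, relying instead on the exact compensation between the kernel's $L^\infty$-size on an annular shell and that shell's volume, which is the feature that survives the limit. Checking that this compensation, the finite overlap of active scales, and the Calderón--Zygmund bookkeeping all carry constants independent of $\delta$ is the crux; the multi-parameter statement is then obtained by replacing the single Littlewood--Paley decomposition with its product analogue and iterating the one-parameter estimates in each factor.
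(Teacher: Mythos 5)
Your part (1) is essentially fine and close in spirit to the paper (uniform Fourier decay $|\widehat{\nu^\delta}(\xi)|\lesssim(1+|\xi|)^{-(d-1)/2}$ plus the Duoandikoetxea--Rubio de Francia square-function/bootstrapping scheme; the paper gets (1) from its multi-parameter theorem by the same kind of $L^2$ decay, vector-valued interpolation and Littlewood--Paley argument). The genuine gap is in part (2). First, the decomposition you invoke does not exist as stated: a Calder\'on--Zygmund decomposition of an $H^1$ function at height $\lambda$ with simultaneously $\|b_j\|_\infty\lesssim\lambda$, $\|g\|_{L^2}^2\lesssim\lambda\|f\|_{H^1}$, mean zero, and $\sum_j|Q_j|\lesssim\lambda^{-1}\|f\|_{H^1}$ is not the standard one (a single extreme atom at height $\lambda=1$ already forces either unbounded bad pieces or massive overlap), and with the standard CZ pieces one only controls $\|b_j\|_1\lesssim\lambda|Q_j|$, which gives no $L^2$ control of the bad part --- and then the step ``invoke the $L^2$ bound where the pointwise bound is too weak, as in Christ'' cannot even be set up. More importantly, that step is the entire difficulty: because $L^{1,\infty}$ is not subadditive, one cannot treat the cubes one at a time; the $L^2$ contributions of all bad pieces beyond their stopping scales must be summed together, and a naive triangle inequality produces $\big(\sum_j\lambda|Q_j|^{1/2}\big)^2$, which is not $\lesssim\lambda\|f\|_{H^1}$. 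This cross-cube summation is exactly what forces the paper to use the Seeger--Wright decomposition (pieces $b_{W,j}$ organised by level sets of $S_{max}f$, with $\sum_W|W|^{1/2}\gamma_{W,\kappa}\lesssim\|f\|_{H^1}$), the almost-orthogonality lemma, and stopping times $\tau(W)$ calibrated to $\lambda^{-1}|W|^{1/2}\gamma_{W,\kappa}$; your sketch replaces all of this by a reference to Christ, which is where the proof actually lives.

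Second, the $\delta$-uniformity problem at the endpoint is not resolved by the ``compensation'' you describe. The identity (kernel sup on the shell)$\times$(shell volume)$\sim\|b_j\|_1$ is just mass conservation: it gives no decay in $k$, so neither summing the $L^1$ bounds over scales (that costs $\log(1/\delta)$, via $\sum_k\min(1,\rho_j 2^{-k}\delta^{-1})$) nor putting all shells into an exceptional set (their measures $2^{k(d-1)}(\rho_j+2^k\delta)$ sum to infinity) is admissible. In your own bookkeeping, once $2^k\delta>\rho_j$ the exceptional shell per scale has measure $\sim2^{kd}\delta$, growing by a factor $2^d$ per scale, so beating it after a stopping time requires an $L^2$ gain of order $2^{-(k-\ell)d/2}$ per scale; the only $L^2$ input your key lemma provides is the $\delta$-free $(1+|\xi|)^{-(d-1)/2}$ decay, which gives $2^{-(k-\ell)(d-1)/2}$ and provably does not close uniformly in $\delta$ in that regime. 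The paper handles precisely this by proving a second, $\delta$-dependent estimate, $|\widehat{\chi_{C^\delta(0,1)}}(\xi)|\lesssim\delta(1+|\xi|)^{-(d-1)/2}$, interpolated with the trivial $(1+|\xi|)^{-(d+1)/2}$ bound to get $\delta^{-1/2}(1+|\xi|)^{-d/2}$ after normalisation, together with a second stopping time keyed to $2^{\tilde\tau d}\delta\geq\lambda^{-1}|W|^{1/2}\gamma_{W,\kappa}$, so that the $\delta^{-1/2}$ loss in $L^2$ is exactly cancelled by the $\delta$ in the shell measure. Your plan contains neither this second estimate nor any substitute for it (one could imagine exploiting the exact vanishing of $b_j*_\delta\sigma_k$ in the deep interior of the thick annulus, but you do not make that observation --- your shell volume is the full $(2^k)^{d-1}(\rho_j+2^k\delta)$), so as written the endpoint argument does not go through with constants independent of $\delta$.
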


We further study the multi-parameter version of the maximal operator 
as in Definition \ref{def1}, modelled on J. Lee, S. Lee and Oh \cite{LLO} and  Hickman and Zahl \cite{HZ}.
\begin{definition}\label{def2}
Suppose $0<\delta<1/2$. Define the strong lacunary $\delta$-discretised maximal operator 
\begin{align}
\mathcal M_{lac}^\delta(f)(x):= \sup_{\vec{k}\in \mathbb Z^d}  |f *_\delta \sigma_{\vec{k}}  (x)|, \qquad x\in\Rd,
\end{align}
where 
\begin{align}
f *_\delta \sigma_{\vec{k}}  (x) = {1\over | C^\delta(0,1) | }
\int_{C^\delta(0,1) } f(x-2^{\vec{k}}y) dy.
\end{align}
Here  $2^{\vec{k}}y = (2^{k_1}y_1,\ldots, 2^{k_d}y_d)$.
\end{definition}

\begin{theorem}\label{thm1}
Suppose $0<\delta<1/2$. Then $\mathcal M_{lac}^\delta(f)$ is bounded on $L^p(\Rd)$ for all $1<p<\infty$
with
$$ \|\mathcal M_{lac}^\delta(f)\|_{L^p(\Rd)} \lesssim \|f\|_{L^p(\Rd)}.$$
Here the implicit constant in the above inequality is independent of $\delta$.

\end{theorem}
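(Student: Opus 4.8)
The plan is to run the Littlewood--Paley scheme behind Theorem~\ref{thm0}, but with a $d$-fold \emph{product} decomposition adapted to the anisotropic dilations $2^{\vec k}=\mathrm{diag}(2^{k_1},\dots,2^{k_d})$. Write $\mathcal M_{lac}^\delta f=\sup_{\vec k\in\mathbb Z^d}|f\ast(\rho^\delta)_{2^{\vec k}}|$, where $\rho^\delta:=|C^\delta(0,1)|^{-1}\mathbf 1_{C^\delta(0,1)}$ is a probability density supported in $B(0,3/2)$ and $(\rho^\delta)_{2^{\vec k}}$ is its $2^{\vec k}$-dilate. Passing to polar coordinates exhibits $\rho^\delta$ as an average over $r\in[1-\delta,1+\delta]$ of the surface measures on the spheres of radius $r$, so the stationary-phase bound $|\partial^\alpha_\xi\widehat\sigma(\xi)|\lesssim_\alpha|\xi|^{-(d-1)/2}$ ($|\xi|\ge1$) together with $|C^\delta(0,1)|\approx\delta$ yields the $\delta$-uniform estimate $|\partial^\alpha_\xi\widehat{\rho^\delta}(\xi)|\lesssim_\alpha\min(1,|\xi|^{-(d-1)/2})$. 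Fixing a product Littlewood--Paley partition $1=\Psi_0(t)+\sum_{n\ge1}\Psi(2^{-n}t)$ on $\mathbb R$ with $\Psi$ supported in $\{1/2\le|t|\le2\}$, put $\widehat{\rho^\delta_{\vec j}}(\xi):=\widehat{\rho^\delta}(\xi)\prod_{l=1}^d\Psi_{j_l}(\xi_l)$ for $\vec j\in\mathbb N^d$, so that $\mathcal M_{lac}^\delta f\le\sum_{\vec j}\mathcal M_{\vec j}f$ with $\mathcal M_{\vec j}f:=\sup_{\vec k}|f\ast(\rho^\delta_{\vec j})_{2^{\vec k}}|$.

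For each piece I would establish two estimates. First, $\|\mathcal M_{\vec j}\|_{L^2\to L^2}\lesssim 2^{-\frac{d-1}{2}\max_l j_l}$: for $\vec j\in(\mathbb N_{\ge1})^d$ this comes from $\mathcal M_{\vec j}f\le(\sum_{\vec k}|f\ast(\rho^\delta_{\vec j})_{2^{\vec k}}|^2)^{1/2}$, Plancherel, and almost-orthogonality in $\vec k$ (each $\Psi_{j_l}(2^{k_l}\xi_l)$ confines $k_l$ to an $O(1)$ set, on which $|2^{\vec k}\xi|\approx2^{\max_l j_l}$); pieces with some $j_l=0$ are first dominated by an iteration of one-dimensional Hardy--Littlewood maximal operators in those variables composed with the corresponding full operator in the remaining ones, which is legitimate since $\rho^\delta_{\vec j}$ is then smooth at unit scale in those variables, with $\delta$-uniform bounds. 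Second, the pointwise bound $\mathcal M_{\vec j}f\lesssim 2^{|\vec j|_1}M_s f$, where $M_s$ is the strong (iterated) Hardy--Littlewood maximal operator: this uses $\|\rho^\delta_{\vec j}\|_{L^\infty}\lesssim 2^{|\vec j|_1}$ uniformly in $\delta$, and that $\rho^\delta_{\vec j}$ is, up to rapidly decaying $\vec j$-uniform tails, supported in $B(0,2)$, so each $(\rho^\delta_{\vec j})_{2^{\vec k}}$ is pointwise $\lesssim 2^{|\vec j|_1}$ times a tensor product of $L^1$-normalised indicators. For $2\le p<\infty$ these already suffice: the $L^2$ bound interpolates with the trivial $\|\mathcal M_{\vec j}\|_{L^\infty\to L^\infty}\le\|\rho^\delta_{\vec j}\|_{L^1}\le\|\rho^\delta\|_{L^1}\prod_l\|\Psi_{j_l}^\vee\|_{L^1}\lesssim1$ (uniform in $\vec j,\delta$) to give $\|\mathcal M_{\vec j}\|_{L^p\to L^p}\lesssim 2^{-\frac{d-1}{p}\max_l j_l}$, and since only $\lesssim(J+1)^{d-1}$ vectors $\vec j$ have $\max_l j_l=J$, summation over $\vec j$ proves the theorem for $p\ge2$, uniformly in $\delta$.

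The range $1<p<2$ is the crux, because crude interpolation is unavailable: the example $f=\mathbf 1_{B(0,2^{-\max_l j_l})}$ forces $\|\mathcal M_{\vec j}\|_{L^1\to L^{1,\infty}}\gtrsim 2^{|\vec j|_1}$, far too large to interpolate against the $L^2$ gain down to $p$ near $1$. As in the one-parameter theorem, one must invoke an endpoint-type estimate. One natural route: by the pointwise bound above, $\mathcal M_{\vec j}$ is bounded on $L^2(w)$ for every $w$ in the product (strong) $A_2$ class with norm $\lesssim 2^{|\vec j|_1}[w]$; a Stein--Weiss change-of-weight interpolation against the unweighted $L^2$ gain (using $[w^\theta]\le[w]^\theta$ within the class and $|\vec j|_1\le d\max_l j_l$) produces, for all sufficiently small $\theta>0$, the summable-in-$\vec j$ bound $\|\mathcal M_{\vec j}\|_{L^2(w^\theta)\to L^2(w^\theta)}\lesssim 2^{-c(\theta)\max_l j_l}[w]^\theta$ with $c(\theta)>0$; summing and then running a Rubio de Francia type extrapolation within the product weight class upgrades the resulting $L^2(w^\theta)$ boundedness of $\mathcal M_{lac}^\delta$ to $L^p$ for all $1<p<\infty$, uniformly in $\delta$. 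Alternatively one proves the weak-type endpoint $\mathcal M_{lac}^\delta : H^1_{\mathrm{prod}}(\mathbb R^d)\to L^{1,\infty}(\mathbb R^d)$ through the rectangle-atom decomposition of the product Hardy space and Journé's covering lemma, exploiting the vanishing moments of the atoms against the mean-zero pieces $\rho^\delta_{\vec j}$ to gain off-support decay and $\vec j$-decay simultaneously, and then interpolates with the $L^2$ bound. Either way, this $p<2$ analysis is the technical heart of the argument: it must be organised so that the $\vec j$-dependent losses stay summable while every constant remains uniform in $\delta$.
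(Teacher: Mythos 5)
Your first half coincides with the paper's argument (Section~\ref{s:5}): the same product Littlewood--Paley decomposition adapted to the dilations $2^{\vec k}$, the $\delta$-uniform Fourier decay \eqref{estimate1} of the normalised annulus measure, the $L^2$ gain $2^{-\frac{d-1}{2}\max_l j_l}$ for each piece (Lemma~\ref{l2}), and interpolation with the trivial $L^\infty$ bound for $p\ge 2$. The divergence, and the problem, is the range $1<p<2$. Your claim that ``one must invoke an endpoint-type estimate'' there is false: the paper closes this range with the Duoandikoetxea--Rubio de Francia bootstrapping argument, namely the vector-valued operator $\{f*\Psi_{\vec k-\vec j}\}_{\vec k}\mapsto\{A_{\vec j}^{\vec k}(f*\Psi_{\vec k-\vec j})\}_{\vec k}$ is bounded on $L^1(\ell^1)$ uniformly in $\vec j$ and $\delta$, and on $L^2(\ell^\infty)$ with the gain $2^{-\frac{d-1}{2}\max_l j_l}$; interpolating gives an $L^{4/3}(\ell^2)$ bound with gain $2^{-\frac{d-1}{4}\max_l j_l}$, the multi-parameter Littlewood--Paley inequality converts this into an $L^{4/3}$ bound for $M_{\vec j}$ with geometric decay, and iterating reaches every $p>1$ before summing in $\vec j$. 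Your observation that each single piece has weak-$(1,1)$ norm $\gtrsim 2^{|\vec j|_1}$ is beside the point, since the bootstrap never interpolates against a per-piece weak-type bound.

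Both substitutes you propose for this step have genuine gaps. In route (a), Stein--Weiss interpolation between $\|\mathcal M_{\vec j}\|_{L^2(w)\to L^2(w)}\lesssim 2^{|\vec j|_1}C([w])$ and the unweighted gain forces $\theta$ small (roughly $\theta<\tfrac{d-1}{3d-1}$, using $|\vec j|_1\le d\max_l j_l$) to keep $\vec j$-summability; but the weights the Rubio de Francia construction actually requires to descend to $L^p$, $p<2$, are of the form $(Rh)^{-(2-p)}$, i.e.\ powers $u^\theta$ with $\theta=2-p$. So a single pass only reaches $p\in(2-\theta_*,2)$ with $\theta_*<1/3$, and ``extrapolation to all $1<p<\infty$'' is not available: for fixed small $\theta$ the class $\{w^\theta:w\in A_2^{\mathrm{prod}}\}$ is a proper subclass of the relevant $A_q$ class, since a general $A_q$ weight cannot be written as a small power of an $A_2$ weight with controlled characteristic. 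Repairing this requires iterating the weighted interpolation in $p$, which is just a weighted rewriting of the bootstrap you set aside. In route (b), an $H^1_{\mathrm{prod}}\to L^{1,\infty}$ bound for the multi-parameter lacunary operator is not a known or routine statement; rectangle atoms alone are insufficient to conclude product-$H^1$ boundedness (Carleson's example), Journ\'e's lemma is invoked without any of the required off-support estimates for $*_\delta\sigma_{\vec k}$, and no multi-parameter analogue of the Seeger--Wright decomposition is supplied. As written, neither route closes $1<p<2$, so the proposal is incomplete exactly at the point you yourself identify as the technical heart.
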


%\nepage

Regarding the approach and techniques, we comment as follows.

\begin{remark}
The implicit constants in Theorems \ref{thm0} and \ref{thm1} are uniform in $\delta$, and thus, by taking the limit $\delta \to 0$, our results recover the classical boundedness of lacunary spherical maximal functions. This can be seen from the standard argument that 
\begin{align*}
M_{lac}(f)(x)\leq \liminf_{\delta\to0^+} M_{lac}^\delta(f)(x),
\qquad x\in\Rd,
\end{align*}
where $M_{lac}(f)(x)$ is the standard lacunary spherical maximal function by Duoandikoetxea and Rubio de Francia \cite{DR} and Christ \cite{Chr}. Similar estimate holds for the lacunary strong spherical maximal function.

\end{remark}

\begin{remark}
In order to keep the implicit constants in Theorem \ref{thm0} (1),  and in Theorem \ref{thm1} to be independent of  $\delta$, for the $L^p$ boundedness, we use 
the Fourier decay estimates, the bootstrapping argument and the (multi-parameter) Littlewood--Paley theory. 
\end{remark}

\begin{remark}
To prove that the implicit constant in Theorem~\ref{thm0}~(2) for the endpoint \( H^1\to L^{1,\infty} \) boundedness is independent of \( \delta \), we first establish two different  types of Fourier decay estimates for \( \varphi_j *_\delta \sigma_k \): one that depends on \( \delta \), and one that does not.  
We then employ a special atomic decomposition of the Hardy space due to Seeger and Wright~\cite{MR2918096}. Motivated by the ideas of Christ~\cite{Chr} and Seeger and Wright~\cite{MR2918096}, we split the lacunary \( \delta \)-discretised spherical maximal operator \( M_{lac}^\delta(f) \) into two parts, according to the side length of the dyadic cubes (i.e., the supports of the atoms) and the thickness of the dilated neighbourhood.  
Building on this decomposition, we apply two different types of stopping time arguments, each paired with one of the two Fourier decay estimates for \( \varphi_j *_\delta \sigma_k \). Then Theorem~\ref{thm0}~(2) follows from the \( L^2 \) boundedness, the Fourier decay estimates, almost orthogonality, Littlewood--Paley theory, and the estimate on the exceptional set, together with the stopping time arguments. 
\end{remark}

\subsection{Structure of the Article}
This paper is organized as follows. In Section~\ref{s:2}, we present several auxiliary results on Fourier decay and Littlewood--Paley estimates. Section~\ref{s:3} recalls the special atomic decomposition of the Hardy space \( H^1(\mathbb{R}^d) \) due to Seeger and Wright~\cite{MR2918096}. Sections~\ref{s:4} and~\ref{s:5} are devoted to the proofs of Theorems~\ref{thm0} and~\ref{thm1}, respectively.

%%%%%%%%%%%%%%%%%%%%%%%%%%%%%%%%%%%%%%%%%%%%%%%%%%%%%
\section{Preliminaries}\label{s:2}
\setcounter{equation}{0}

Let $\varphi$, $\psi$ be  Schwartz functions in $\G$ having cancellation condition, and let $\varphi_j$ and $\psi_k$  be the dilation as in \eqref{dilation j}.
The following auxiliary lemmas are needed. 

\begin{lemma}\label{lem ortho 2}
     For all $k,j\in\mathbb{Z}$ the following estimates hold:
    \begin{align*}
    (1)~&\|\psi_{k}* \varphi_j\|_{L^1(\Rd)}\lesssim 2^{-|j-k|}.\\
    (2)~&\|g* \varphi_j*_\delta\sigma\|_{L^2(\Rd)}{\lesssim} \min\{1,
    2^{j(d-1)\over2},\delta^{-{1\over2}}2^{\frac{jd}{2}}\}\|g\|_{L^2(\Rd)}.
    \end{align*}
\end{lemma}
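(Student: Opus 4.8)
For (1) the plan is to run the standard almost-orthogonality argument. By the symmetry of the statement in $j$ and $k$, it suffices to treat $j\le k$. Since $\varphi_j$ has mean zero, for every $x$ we may write
\[
\psi_k*\varphi_j(x)=\int_{\Rd}\varphi_j(y)\bigl(\psi_k(x-y)-\psi_k(x)\bigr)\,dy ,
\]
so that, by Fubini,
\[
\|\psi_k*\varphi_j\|_{L^1(\Rd)}\le\int_{\Rd}|\varphi_j(y)|\Bigl(\int_{\Rd}|\psi_k(x-y)-\psi_k(x)|\,dx\Bigr)dy .
\]
The inner integral is at most $\min\bigl\{2\|\psi\|_{L^1},\,|y|\,\|\nabla\psi_k\|_{L^1}\bigr\}\lesssim\min\{1,2^{-k}|y|\}$, so the right-hand side is $\lesssim 2^{-k}\int_{\Rd}|y|\,|\varphi_j(y)|\,dy\lesssim 2^{-k}\cdot 2^{j}=2^{-|j-k|}$, using that $\varphi$ is Schwartz and that $\varphi_j$ lives at spatial scale $2^{j}$. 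The case $k\le j$ is identical after exchanging the roles of $\varphi$ and $\psi$.

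For (2), put $\nu_\delta:=|C^\delta(0,1)|^{-1}\mathbf 1_{C^\delta(0,1)}$, so that $g*_\delta\sigma=g*\nu_\delta$, $\|\nu_\delta\|_{L^1(\Rd)}=1$ and $|C^\delta(0,1)|\sim\delta$. By Plancherel,
\[
\|g*\varphi_j*_\delta\sigma\|_{L^2(\Rd)}=\bigl\|\wh g\,\wh{\varphi_j}\,\wh{\nu_\delta}\bigr\|_{L^2(\Rd)}\le\bigl\|\wh{\varphi_j}\,\wh{\nu_\delta}\bigr\|_{L^\infty(\Rd)}\|g\|_{L^2(\Rd)},
\]
so it is enough to bound $\|\wh{\varphi_j}\,\wh{\nu_\delta}\|_{L^\infty}$ by each of $1$, $2^{j(d-1)/2}$ and $\delta^{-1/2}2^{jd/2}$. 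The bound by $1$ is immediate from $|\wh{\varphi_j}|\le\|\varphi\|_{L^1}\lesssim 1$ and $|\wh{\nu_\delta}|\le\|\nu_\delta\|_{L^1}=1$. For the other two I would first record two pointwise estimates on $\wh{\nu_\delta}$. In polar coordinates,
\[
\wh{\nu_\delta}(\xi)=\frac{1}{|C^\delta(0,1)|}\int_{1-\delta}^{1+\delta}\wh\sigma(t\xi)\,t^{d-1}\,dt ,
\]
so the classical decay $|\wh\sigma(\eta)|\lesssim(1+|\eta|)^{-(d-1)/2}$, together with $|C^\delta(0,1)|^{-1}\int_{1-\delta}^{1+\delta}t^{d-1}\,dt\sim 1$, gives the $\delta$-uniform bound $|\wh{\nu_\delta}(\xi)|\lesssim(1+|\xi|)^{-(d-1)/2}$. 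For the $\delta$-dependent refinement I would substitute the asymptotic expansion $\wh\sigma(\eta)=\sum_{\pm}e^{\pm i|\eta|}b_\pm(|\eta|)$, valid for $|\eta|\gtrsim 1$ with $|b_\pm(s)|\lesssim s^{-(d-1)/2}$ and $|b_\pm'(s)|\lesssim s^{-(d+1)/2}$, and apply van der Corput's first-derivative test (equivalently, integrate by parts once) in the radial integral, whose phase $\pm t|\xi|$ has derivative of constant size $|\xi|$; the boundary and total-variation terms then produce
\[
|\wh{\nu_\delta}(\xi)|\lesssim\frac1\delta\,|\xi|^{-(d+1)/2}\qquad(|\xi|\gtrsim\delta^{-1}) .
\]

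Finally I would combine these two estimates with the cancellation of $\varphi$: since $\wh\varphi$ vanishes at the origin (to sufficient order) and is Schwartz, $\wh{\varphi_j}$ is, up to rapidly decreasing tails, concentrated at frequencies $|\xi|\sim 2^{-j}$ in the normalisation used here. Decomposing frequency space into dyadic shells $|\xi|\sim 2^{m}$, the tails of $\wh{\varphi_j}$ for $m\ge -j$ sum against the non-increasing majorant of $|\wh{\nu_\delta}|$, and the vanishing of $\wh\varphi$ at the origin controls the shells $m\le -j$, so that $\|\wh{\varphi_j}\,\wh{\nu_\delta}\|_{L^\infty}$ is bounded by the value of that majorant at radius $2^{-j}$, namely $\min\{1,\,2^{j(d-1)/2},\,\delta^{-1}2^{j(d+1)/2}\}$. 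Comparing against the thresholds $2^{j}=1$ and $2^{j}=\delta$, and using that $\delta^{-1}2^{j(d+1)/2}\le\delta^{-1/2}2^{jd/2}$ whenever $2^{j}\le\delta$, this quantity is $\lesssim\min\{1,2^{j(d-1)/2},\delta^{-1/2}2^{jd/2}\}$, which is (2).

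The step I expect to be the main obstacle is the $\delta$-dependent decay estimate for $\wh{\nu_\delta}$: extracting the extra gain $\delta^{-1}|\xi|^{-1}$ beyond the uniform bound $|\xi|^{-(d-1)/2}$ forces one to move past the mere size of $\wh\sigma$ and exploit the oscillation of $t\mapsto\wh\sigma(t\xi)$ across the annulus of thickness $\delta$, which is why the Bessel-type asymptotics of $\wh\sigma$ and a stationary-phase / integration-by-parts argument enter. Once that estimate is available, the dyadic bookkeeping in the frequency variable — where the cancellation of $\varphi$ is precisely what kills the low-frequency contributions — is routine.
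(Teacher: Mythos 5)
Your proposal is correct and follows essentially the same route as the paper: part (1) is the standard cancellation-plus-smoothness almost-orthogonality estimate, and part (2) reduces via Plancherel to the two Fourier decay bounds $|\widehat{\nu_\delta}(\xi)|\lesssim(1+|\xi|)^{-\frac{d-1}{2}}$ and $|\widehat{\nu_\delta}(\xi)|\lesssim\delta^{-1}(1+|\xi|)^{-\frac{d+1}{2}}$, whose combination (min, equivalently geometric mean) yields the $\delta^{-\frac12}(1+|\xi|)^{-\frac d2}$ bound exactly as in the paper's estimates \eqref{estimate1}, \eqref{estimate0.2} and \eqref{estimate2}. The only differences are technical conveniences: the paper obtains the decay estimates by evaluating the radial integral exactly via the Bessel antiderivative identity together with the mean value theorem, and it assumes $\widehat{\varphi}$ is supported in an annulus so that no dyadic-shell bookkeeping is needed, whereas you use the asymptotic expansion of $\widehat{\sigma}$ with a van der Corput/integration-by-parts step and accommodate a general Schwartz $\varphi$ with sufficiently high-order cancellation.
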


\begin{proof}
The argument (1) follows directly from the almost orthogonality estimates
$$|\psi_{k}* \varphi_j(x)|\lesssim 2^{-|j-k|} {2^j+2^k\over (2^j+2^k+|x|)^{d+1}}.$$

We now prove (2). By Plancheral identity, it suffices to estimate $\widehat{\chi_{C^\delta(0,1)}}(\xi)$. Apply Polar coordinate formula, we have
\begin{align*}
\widehat{\chi_{C^\delta(0,1)}}(\xi)
	&:=\int_{\Rd}\chi_{C^\delta(0,1)}(x)e^{-2\pi ix\cdot\xi}dx\\
	&= \int_{1-\delta}^{1+\delta}\widehat{\sigma}(r\xi) r^{d-1}dr\\
    &=\int_{1-\delta}^{1+\delta}\frac{J_{\frac{d-2}{2}}(2\pi r|\xi|)}{(r|\xi|)^{\frac{d-2}{2}}}r^{d-1}dr\\\
    &=\int_{0}^{1+\delta}\frac{J_{\frac{d-2}{2}}(2\pi r|\xi|)}{(r|\xi|)^{\frac{d-2}{2}}}r^{d-1}dr-\int_{0}^{1-\delta}\frac{J_{\frac{d-2}{2}}(2\pi r|\xi|)}{(r|\xi|)^{\frac{d-2}{2}}}r^{d-1}dr.
\end{align*}
By changing of variable, we further have
\begin{align*}
&\widehat{\chi_{C^\delta(0,1)}}(\xi)\\
&=2\pi\bigg[\frac{(1+\delta)^{\frac{d}{2}+1}}{|\xi|^\frac{d-2}{2}}\int_{0}^{1}{J_{\frac{d-2}{2}}(2\pi r(1+\delta)|\xi|)}\cdot r^{\frac{d-2}{2}+1 }\,dr\\
&\qquad\qquad-\frac{(1-\delta)^{\frac{d}{2}+1}}{|\xi|^\frac{d-2}{2}}\int_{0}^{1}{J_{\frac{d-2}{2}}(r(1-\delta)|\xi|)}\cdot r^{\frac{d-2}{2}+1 }\,dr\bigg]\\
&=2\pi\bigg[\frac{(1+\delta)^{\frac{d}{2}+1}}{|\xi|^\frac{d-2}{2}}\frac{J_{\frac{d-2}{2}}(2\pi r(1+\delta)|\xi|)}{2\pi(1+\delta)|\xi|}-\frac{(1-\delta)^{\frac{d}{2}+1}}{|\xi|^\frac{d-2}{2}}\frac{J_{\frac{d-2}{2}}(r(1-\delta)|\xi|)}{2\pi(1-\delta)|\xi|}\bigg]\\
&=|\xi|^{\frac{-d}{2}}\left[{(1+\delta)^{\frac{d}{2}}}{J_{\frac{d-2}{2}}(2\pi r(1+\delta)|\xi|})-{(1-\delta)^{\frac{d}{2}}}{J_{\frac{d-2}{2}}(r(1-\delta)|\xi|)}\right].
\end{align*}
 Equipped the recursion formula of the Bessel function equipped with the Mean Value Theorem and , we can get that there exists $-\delta<\theta<\delta$ such that
\begin{align}\label{estimate0.1}
	|\widehat{\chi_{C^\delta(0,1)}}|(\xi)&\leq \frac{2\delta}{|\xi|^{\frac{d}{2}}}\left((\cdot)^{\frac{d}{2}}J_{\frac{d}{2}}(|2\pi \xi|\cdot)\right)'(1+\theta)\notag\\
    &=\frac{\delta}{\pi}\left|\widehat{\sigma}(|\xi|(1+\theta))\right|\cdot(1+\theta)^d\notag\\
    &\leq C_d\cdot \frac{\delta}{(1+|\xi|)^{\frac{d-1}{2}}}.
\end{align}
This implies that 
\begin{align}\label{estimate1}
	\frac{|\widehat{\chi_{C^\delta(0,1)}}(\xi)|}{|\chi_{C^\delta(0,1)}|}\leq \frac{C_d}{(1+|\xi|)^{\frac{d-1}{2}}}.
\end{align}
On the other hand, by using the size estimates and the asymptotic expansion of the Bessel function (see for example \cite{MR1232192}), we also have
\begin{align}\label{estimate0.2}
|\widehat{\chi_{C^\delta(0,1)}}(\xi)|&\leq|\xi|^{\frac{-d}{2}}\left[\left|{(1+\delta)^{\frac{d}{2}}}{J_{\frac{d-2}{2}}(2\pi r(1+\delta)|\xi|})\right|+\left|{(1-\delta)^{\frac{d}{2}}}{J_{\frac{d-2}{2}}(r(1-\delta)|\xi|)}\right|\right]\notag\\
	&\leq C_d\cdot \frac{1}{(1+|\xi|)^{\frac{d+1}{2}}}.
\end{align}
Thus, from the above two estimates \eqref{estimate0.1} and \eqref{estimate0.2}, we get 
\begin{eqnarray}\label{estimate2}
\frac{|\widehat{\chi_{C^\delta(0,1)}}(\xi)|}{|\chi_{C^\delta(0,1)}|}\lesssim {1\over\delta}\left(\frac{1}{(1+|\xi|)^{\frac{d+1}{2}}}\right)^{1\over2}\left(\frac{\delta}{(1+|\xi|)^{\frac{d-1}{2}}}\right)^{1\over2}={1\over\delta^{1\over2}(1+|\xi|)^{d\over2}}.
\end{eqnarray}
Since the Fourier support of $\varphi$ is contained in $ \{\xi: {1\over2}<|\xi|<2\},$
we see that 
$|\xi|\approx 2^{-j}$ 
if $\hat{\varphi}(2^j\xi)\not=0$. Thus, if $j\leq0$,
then by
using Plancherel theorem and \eqref{estimate2},  we obtain
\begin{align*}
	\|g* \varphi_j*_\delta\sigma\|_{L^2(\Rd)}
	&=
\left\|\widehat{g}(\xi)\widehat{\varphi}(2^j\xi){\widehat{\chi_{C^\delta(0,1)}}\over | C^\delta(0,1) | }(\xi)\right\|_{L^2(\Rd)}\\
&\leq \|\widehat{g}\|_{L^2(\Rd)} \sup_{\xi\in\mathbb{R}^d}\bigg|\widehat{\varphi}(2^j\xi){\widehat{\chi_{C^\delta(0,1)}}\over | C^\delta(0,1) | }(\xi)\bigg|\\
&\lesssim \delta^{-{1\over2}} 2^{jd\over2} \|\widehat{g}\|_{L^2(\Rd)} .
\end{align*}

While using Plancherel theorem and \eqref{estimate1}, we can also obtain 
\begin{align*}
	\|g* \varphi_j*_\delta\sigma\|_{L^2(\Rd)}
&\lesssim  2^{j(d-1)\over2} \|\widehat{g}\|_{L^2(\Rd)} .
\end{align*}
On the other hand, if $j>0$, then via \eqref{estimate1}, we have
\begin{align*}
	\|g* \varphi_j*_\delta\sigma\|_{L^2(\Rd)}
&\lesssim  \|\widehat{g}\|_{L^2(\Rd)} .
\end{align*}

Combining these three cases, we see that (2) holds. The proof is complete.
\end{proof} 
\smallskip
\begin{lemma}\label{lem ortho 3}
Let $\psi$ and $\varphi$ be two Schwartz functions with cancellations.
     For all $k,j\in\mathbb{Z}$ the following estimates hold:
    \begin{align*}
    |\psi_{k}* \varphi_j*f(x)|\lesssim 2^{-|j-k|} M(f)(x),
    \end{align*}
    where $M(f)(x)$ denotes the standard Hardy--Littlewood maximal function on $\Rd$.
\end{lemma}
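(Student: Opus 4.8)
The plan is to reduce the statement to the classical principle that convolution against a nonnegative, radially nonincreasing, integrable kernel --- at any dilation scale --- is dominated pointwise by the Hardy--Littlewood maximal function, and then to feed in the pointwise kernel bound for $\psi_k*\varphi_j$ already recorded in the proof of Lemma~\ref{lem ortho 2}~(1).

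First I would recall that, since $\psi$ and $\varphi$ are Schwartz functions with vanishing moments, their dyadic dilates obey the almost orthogonality estimate
\[
|\psi_k*\varphi_j(x)|\lesssim 2^{-|j-k|}\,\frac{2^j+2^k}{(2^j+2^k+|x|)^{d+1}},\qquad x\in\Rd,
\]
which is exactly the bound used in the proof of Lemma~\ref{lem ortho 2}~(1). Setting $s:=2^j+2^k\approx 2^{\max\{j,k\}}$ and $\Phi(y):=(1+|y|)^{-(d+1)}$, the function $\Phi$ is radially nonincreasing and lies in $L^1(\Rd)$ (the exponent $d+1>d$ is what makes this so), and the right-hand side above is precisely $2^{-|j-k|}\,s^{-d}\Phi(x/s)$, i.e.\ $2^{-|j-k|}$ times an $L^1$-normalised dilate of $\Phi$ at scale $s$.

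Then I would invoke the standard majorization: for a nonnegative radially nonincreasing $\Phi\in L^1(\Rd)$,
\[
\sup_{s>0}\,\bigl|\bigl(s^{-d}\Phi(\cdot/s)\bigr)*f\bigr|(x)\le \|\Phi\|_{L^1(\Rd)}\,M(f)(x),\qquad x\in\Rd,
\]
which, if one prefers not to quote it, can be reproved directly by splitting $\Rd$ into the annuli $\{\,|y|\sim 2^m s\,\}$, $m\ge 0$: on such an annulus $\Phi(\cdot/s)\lesssim 2^{-m(d+1)}$ by monotonicity while $\int_{|x-y|\lesssim 2^m s}|f(y)|\,dy\lesssim (2^m s)^d M(f)(x)$, and summing the resulting geometric series in $m$ gives the bound. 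Combining the two displays yields
\[
|\psi_k*\varphi_j*f(x)|\le 2^{-|j-k|}\bigl(\,s^{-d}\Phi(\cdot/s)*|f|\,\bigr)(x)\lesssim 2^{-|j-k|}M(f)(x)
\]
for all $j,k\in\Z$, which is the claim. I do not expect any genuine obstacle here: all the analytic substance sits in the pointwise kernel estimate, which is already available from Lemma~\ref{lem ortho 2}, and the remaining step is a textbook reduction to the maximal function; the only thing to check is that the kernel is $L^1$-normalised at the single scale $s=2^j+2^k$, which is immediate from the decay exponent $d+1$.
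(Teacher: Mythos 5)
Your proposal is correct and follows exactly the same route as the paper: the paper's one-line proof invokes the pointwise almost-orthogonality bound from Lemma~\ref{lem ortho 2}~(1) together with the Poisson-type decay of $\psi_k*\varphi_j$, which is precisely your reduction to a radially decreasing $L^1$-normalised kernel dominated by the Hardy--Littlewood maximal function. No gaps; the scale normalisation at $s=2^j+2^k$ that you check is indeed the only point worth verifying.
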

\begin{proof}
Via Lemma \ref{lem ortho 2} (1) and the Poisson decay of $\psi_{k}* \varphi_j(x)$, we obtain this pointwise estimate directly.
\end{proof}
\smallskip
\begin{lemma}[Almost Orthogonal Lemma]\label{ort}~\\
    Let $1<p\leq2$, and $\{g_j\}_j$ be the collection of suitable functions such that 
    $$ \sum_j\|g_j\|_{L^p(\Rd)}^p<\infty,$$
    then for every Schwartz function $\varphi$ in $\G$ with high order cancellation and with the dilation $\varphi_j(x)$ defined as in \eqref{dilation j}, we have
   $$\Big\|\sum_j \varphi_j*g_j\Big\|_{L^p(\Rd)}\lesssim \bigg(  \sum_j   \|g_j\|_{L^p(\Rd)}^p
 \bigg)^{\frac{1}{p}}.
   $$ 
\end{lemma}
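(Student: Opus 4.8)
The plan is to prove the almost-orthogonal estimate
$$\Big\|\sum_j \varphi_j*g_j\Big\|_{L^p(\Rd)}\lesssim \Big(\sum_j\|g_j\|_{L^p(\Rd)}^p\Big)^{1/p},\qquad 1<p\le 2,$$
by interpolating between an $L^2$ estimate obtained from Littlewood--Paley orthogonality and an elementary $L^1$-type estimate, or alternatively by a direct duality/square-function argument. I sketch the duality route, which seems cleanest. By duality it suffices to bound $\big|\sum_j\langle\varphi_j*g_j,h\rangle\big|=\big|\sum_j\langle g_j,\tilde\varphi_j*h\rangle\big|$ for $h\in L^{p'}(\Rd)$ with $\|h\|_{p'}\le1$, where $\tilde\varphi_j$ is the reflected dilate (still a Schwartz function with high-order cancellation). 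Applying H\"older's inequality in $x$ for each $j$ and then H\"older in the summation index $j$ with exponents $p$ and $p'$ gives
$$\Big|\sum_j\langle g_j,\tilde\varphi_j*h\rangle\Big|\le\sum_j\|g_j\|_{p}\,\|\tilde\varphi_j*h\|_{p'}\le\Big(\sum_j\|g_j\|_p^p\Big)^{1/p}\Big(\sum_j\|\tilde\varphi_j*h\|_{p'}^{p'}\Big)^{1/p'}.$$
So the whole matter reduces to the \emph{reverse} square-function (or $\ell^{p'}$) bound $\big(\sum_j\|\tilde\varphi_j*h\|_{p'}^{p'}\big)^{1/p'}\lesssim\|h\|_{p'}$, valid because $2\le p'<\infty$ and $\tilde\varphi$ has enough cancellation: this is the standard Littlewood--Paley inequality $\|(\sum_j|\tilde\varphi_j*h|^2)^{1/2}\|_{p'}\lesssim\|h\|_{p'}$ combined with the embedding $\ell^2\hookrightarrow\ell^{p'}$ (since $p'\ge2$) applied pointwise, which gives $\sum_j\|\tilde\varphi_j*h\|_{p'}^{p'}=\int\sum_j|\tilde\varphi_j*h|^{p'}\le\int(\sum_j|\tilde\varphi_j*h|^2)^{p'/2}\lesssim\|h\|_{p'}^{p'}$.

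I would carry this out in three steps. First, fix the setup: record that $\tilde\varphi_j(x)=2^{jd}\tilde\varphi(2^jx)$ (or whatever normalization \eqref{dilation j} prescribes) is again Schwartz with vanishing moments, so the classical vector-valued Littlewood--Paley estimate applies to $h\mapsto(\tilde\varphi_j*h)_j$; here one uses that $\hat{\tilde\varphi}$ is supported away from the origin, or more robustly the Calder\'on--Zygmund theory of the square function with these kernels, valid on all of $1<q<\infty$. Second, perform the double H\"older argument above to peel off $\big(\sum_j\|g_j\|_p^p\big)^{1/p}$ and leave the factor $\big(\sum_j\|\tilde\varphi_j*h\|_{p'}^{p'}\big)^{1/p'}$. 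Third, dominate that factor by the $\ell^2$-square function via $\ell^2\hookrightarrow\ell^{p'}$ and invoke Littlewood--Paley, then take the supremum over $\|h\|_{p'}\le1$.

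The only genuinely delicate point is that the elementary H\"older-in-$j$ step throws away all cancellation between different scales and would fail at $p=1$ and for $p>2$; it works precisely in the stated range $1<p\le2$ because then $p'\ge2$ and the $\ell^2\to\ell^{p'}$ embedding goes the right way, turning the reverse square-function inequality into exactly what is needed. So the ``main obstacle'' is really just recognizing that no scale-interaction estimate (of the type in Lemma~\ref{lem ortho 2}(1)) is needed for this particular inequality — a naive attempt to sum $\|\varphi_j*g_j\|_p$ directly by the triangle inequality gives only $\sum_j\|g_j\|_p$, which is weaker than $(\sum_j\|g_j\|_p^p)^{1/p}$ and also not scale-invariant in the useful way — and that the clean path is duality plus Littlewood--Paley. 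One should double-check that the normalization of $\varphi_j$ in \eqref{dilation j} is the $L^1$-normalized (mean-value-zero) one so that the square function bound is scale-invariant; if it is instead $L^p$-normalized the argument is identical after absorbing the harmless power of $2^{j}$ into $g_j$. I would also remark that the same conclusion follows by real interpolation between the trivial bound at ``$p=1$'' (in the sense of $\ell^1$ on the right) and the $L^2$ bound, but the duality argument is shorter and self-contained.
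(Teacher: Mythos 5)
Your argument is correct, and it takes a genuinely different route from the paper. The paper proves the lemma ``from the inside'': it writes $\|\sum_j\varphi_j*g_j\|_p\simeq\|S_d(\sum_j\varphi_j*g_j)\|_p$ using the two-sided Littlewood--Paley characterisation, then controls $|\psi_k*\varphi_j*g_j(x)|\lesssim 2^{-|j-k|}M(g_j)(x)$ (Lemmas \ref{lem ortho 2}(1) and \ref{lem ortho 3}), applies Cauchy--Schwarz with the geometric weights $2^{-|j-k|}$, invokes the Fefferman--Stein vector-valued maximal inequality, and finally uses the pointwise embedding $(\sum_j|g_j|^2)^{p/2}\le\sum_j|g_j|^p$, which is where $p\le2$ enters. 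You instead dualise, apply H\"older in $x$ and then in $j$, and reduce everything to the one-sided square-function bound $\|(\sum_j|\tilde\varphi_j*h|^2)^{1/2}\|_{p'}\lesssim\|h\|_{p'}$ together with $\ell^2\hookrightarrow\ell^{p'}$; here $p\le2$ enters as $p'\ge2$, structurally the same place. Your route is shorter and more elementary: it needs no scale-interaction estimate, no Hardy--Littlewood maximal function and no Fefferman--Stein inequality, only the standard (forward) Littlewood--Paley inequality for mean-zero Schwartz dilates, which, as you note, follows from vector-valued Calder\'on--Zygmund theory even without compact Fourier support in an annulus. Two small remarks: the inequality you call the ``reverse'' square-function bound is in fact just the forward one-sided bound (a genuine reverse inequality would require a nondegeneracy condition on $\hat\varphi$ that is neither available nor needed), and your closing interpolation remark between $L^1(\ell^1)\to L^1$ and $L^2(\ell^2)\to L^2$ is also viable since the $L^2$ case is exactly your duality argument at $p=2$; the paper's more hands-on proof buys pointwise control via the maximal function, which is the style of argument that adapts to the multi-parameter and weighted variants the authors care about, but for the statement as given your proof is complete.
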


\begin{proof}
Consider the discrete square function 
$$S_d(f)(x) =\bigg\{\sum_{k} \big|  \psi_k * f(x)\big|^2\bigg\}^{\frac{1}{2}}. $$
By the Littlewood--Paley theory on $\Rd$, we have that 
for all $1<p<\infty$,
$$ \|S_d(f)\|_{L^p(\G)}\simeq \|f\|_{L^p(\G)}, $$
where the implicit constants are independent of $f$. Then we have
\begin{align*}
\bigg\|\sum_j \varphi_j*g_j\bigg\|_{L^p(\Rd)}
&\approx \bigg\| S_d\Big(\sum_j \varphi_j*g_j\Big)\bigg\|_{L^p(\Rd)}
= \bigg\| \bigg\{\sum_{k} \Big| \sum_j \psi_k * \varphi_j*g_j\Big|^2\bigg\}^{\frac{1}{2}}\bigg\|_{L^p(\Rd)}.
\end{align*}

By Lemma \ref{lem ortho 3}, we see that 
\begin{align*}
 \Bigg| \sum_j \psi_k * \varphi_j*g_j(x)\Bigg|&= \Bigg| \sum_j \int_{\Rd} \psi_k * \varphi_j(y) g_j(x-y)dy\Bigg|\\
& \leq C \sum_j 2^{-|j-k|}   M(g_j)(x)\\
& = C \sum_j 2^{-|j-k|/2}\cdot\left( 2^{-|j-k|/2}  M(g_j)(x)\right).
 \end{align*}
Again,  $M$ denotes the standard Hardy--Littlewood maximal function on $\Rd$.

Therefore, by H\"older's inequality and the almost orthogonality, we have
\begin{align*}
\bigg\|\sum_j \varphi_j*g_j\bigg\|_{L^p(\Rd)}
&\lesssim \bigg\| \bigg\{\sum_{k} 
\bigg|\sum_j2^{-|j-k|}   M(g_j) \bigg|^2
\bigg\}^{\frac{1}{2}}\bigg\|_{L^p(\Rd)}\\
&\lesssim \left\| \left\{\sum_{k} \left(\sum_{j'}2^{-|j'-k|} \cdot  \sum_j2^{-|j-k|}   M(g_j)^2\right)
\right\}^{1\over2}\right\|_{L^p(\Rd)}\\
%&\lesssim \bigg\| \bigg\{\sum_j\sum_{k} 2^{-|j-k|}     M(g_j)^2
%\bigg\}^{1\over2}\bigg\|_{L^p(\Rd)}\\
&\lesssim \bigg\| \bigg\{\sum_j     M(g_j)^2
\bigg\}^{\frac{1}{2} }\bigg\|_{L^p(\Rd)}\\
&\lesssim \bigg\| \bigg\{\sum_j    |g_j|^2
\bigg\}^{\frac{1}{2}}\bigg\|_{L^p(\Rd)}.
\end{align*}
By noting that $p\leq 2$, we further have
\begin{align*}
\bigg\|\sum_j \varphi_j*g_j\bigg\|_{L^p(\Rd)}
&\lesssim  \bigg( \int_{\Rd} \Big\{\sum_j    |g_j(x)|^2
\Big\}^{\frac{p}{2}} dx\bigg)^{\frac{1}{p}}
\leq  \bigg( \int_{\Rd} \sum_j   |g_j(x)|^p
 dx\bigg)^{\frac{1}{p}}\\
&=  \bigg\{  \sum_j   \|g_j\|_{L^p(\Rd)}^p
 \bigg\}^{\frac{1}{p}}.
\end{align*}
The proof is complete.
\end{proof}

%%%%%%%%%%%%%%%%%%%%%%%%%%%%%%%%%%%%%

\section{Decomposition of Hardy space $H^1(\Rd)$ in Seeger--Wright~\cite{MR2918096}}\label{s:3}
\setcounter{equation}{0}

In this section we give a special atomic decomposition of the Hardy space. This result is due to Seeger and Wright~\cite{MR2918096}. For the convenience of the readers and for us to cite the notation and detail in Section \ref{s:4} later, we provide the full details here.

Consider the Peetre type maximal square function
$$
S_{max}(f)(x):=\bigg(\sum_{j\in\mathbb{Z}}\sup_{|x-y| <2^{j}}|\phi_j *f (y)|^2\bigg)^{\frac{1}{2}},
$$
where (and throughout the paper)
\begin{align}\label{dilation j}
\phi_j(x) := 2^{-jd}\phi(2^{-j}x)
\end{align}
and $\phi$ is any non-trivial Schwartz function in $\Rd$ such that 
$${\rm supp}\, \hat\phi\subset \{ \xi\in\Rd: {1\over2}<|\xi|<2\}.$$
\begin{definition}
We define 
$H^1(\Rd)$ to be the completion of $\{f\in L^2(\G): S_{max}(f)\in L^1(\G)\}$
under the norm $\|f\|_{H^1(\Rd)}= \|S_{max}(f)\|_{L^1(\G)}$.
\end{definition}
\begin{remark}
We note that the definition of $H^1(\Rd)$ is equivalent to the standard Hardy space via various characterisations \cite{FS}. 
Note that one characterisation of $H^1(\Rd)$ is via the square function, that is,
 $H^1(\Rd)$ to be the completion of $\{f\in L^2(\G): S(f)\in L^1(\G)\}$
under the norm $\|f\|_{H^1(\Rd)}= \|S(f)\|_{L^1(\G)}$,
where
$$
S(f)(x):=\bigg(\sum_{j\in\mathbb{Z}}|\phi_j *f (x)|^2\bigg)^{\frac{1}{2}}.
$$
It is direct to see that $S(f)(x)\leq S_{max}(f)(x)$, and hence $\|S(f)\|_{L^1(\G)}\leq \|S_{max}(f)\|_{L^1(\G)}$.
It suffices to verify the reverse inequality, that is, $\|S_{max}(f)\|_{L^1(\G)}\leq \|S(f)\|_{L^1(\G)}$.
Note also that for $f$ satisfying $\|S(f)\|_{L^1(\G)}<\infty$, one can obtain an atomic decomposition $f=\sum_{j\in\mathbb Z}\lambda_j a_j$, such that each $a_j $ is an atom and that 
$\sum_{j\in\mathbb Z}|\lambda_j| \approx \|S(f)\|_{L^1(\G)}$. Thus, it suffices to show that 
$\|S_{max}(a_j)\|_{L^1(\G)}\lesssim1$ for all $j$. This is a standard estimate via regularity and cancellation.
\end{remark}

We now consider a special atomic decomposition of $H^1(\Rd)$.  
To begin with, we first recall the following representation theorem.

\begin{theorem}[Representation Theorem]\label{rep}~\\
Let $N\in\mathbb{N}$. There exists a function $\varphi\in C^\infty_0(\Rd)$  which has a higher order cancellation up to order $N$ and a function $\phi$ with $\widehat{\phi}\in C^\infty_0(\Rd)$ such that
\begin{align}\label{eq:repro}
f(x)=\sum_{j\in\mathbb{Z}} \varphi_{j}*\varphi_j *  \phi_j*f (x),
\end{align}
where the equality holds in the sense of $L^2(\Rd)$.
\end{theorem}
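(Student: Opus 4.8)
The plan is to prove the Calderón-type reproducing formula \eqref{eq:repro} by a standard Littlewood--Paley argument on the Fourier side, adapted to produce the extra smoothness and cancellation properties required. First I would fix a radial bump function $\widehat{\phi}\in C^\infty_0(\Rd)$ supported in the annulus $\{ \tfrac12 < |\xi| < 2\}$, nonnegative, and normalised so that $\sum_{j\in\mathbb Z} \widehat{\phi}(2^{-j}\xi)^2 = 1$ for all $\xi \neq 0$ (this is the usual dyadic partition of unity, where one first picks any such bump with $\sum_j \widehat{\phi}(2^{-j}\xi)^2$ bounded above and below and then divides by the square root of the sum, which is itself smooth, radial and homogeneous of degree $0$ away from the origin). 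With $\phi_j(x) = 2^{-jd}\phi(2^{-j}x)$, one has $\widehat{\phi_j}(\xi) = \widehat{\phi}(2^j\xi)$, and Plancherel gives $\sum_j \widehat{\phi_j}(\xi)^2 = 1$ a.e., hence $f = \sum_j \phi_j * \phi_j * f$ in $L^2$ for every $f \in L^2(\Rd)$. This is already a reproducing formula, but with only a Schwartz (not compactly supported) function and cancellation of infinite order only in a weak sense; I need to upgrade it.

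Next I would introduce the compactly supported piece $\varphi$. The idea is to choose $\varphi \in C^\infty_0(\Rd)$ whose Fourier transform is identically $1$ on the support of $\widehat{\phi}$, i.e.\ $\widehat{\varphi}(\xi) = 1$ for $\tfrac12 \le |\xi| \le 2$, and which vanishes to order $N$ at the origin, i.e.\ $\partial^\alpha \widehat{\varphi}(0) = 0$ for all $|\alpha| < N$; such a $\varphi$ exists because one can take any $C^\infty_0$ function equal to $1$ near the annulus and $0$ near the origin, and the vanishing of $\widehat\varphi$ near the origin forces all its derivatives there to vanish, which is exactly the higher-order cancellation condition $\int x^\alpha \varphi(x)\,dx = 0$ for $|\alpha| < N$ (in fact for all $\alpha$). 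Since $\widehat{\varphi_j}(\xi) = \widehat{\varphi}(2^j\xi)$ equals $1$ on $\operatorname{supp}\widehat{\phi_j} = \{2^{-j-1} \le |\xi| \le 2^{-j+1}\}$, we get the pointwise identity $\widehat{\varphi_j}(\xi)^2\, \widehat{\phi_j}(\xi) = \widehat{\phi_j}(\xi)$ for every $\xi$ and every $j$. Multiplying by $\widehat{\phi_j}$ again and summing,
\begin{align*}
\sum_{j\in\mathbb Z} \widehat{\varphi_j}(\xi)\,\widehat{\varphi_j}(\xi)\,\widehat{\phi_j}(\xi)\,\widehat{f}(\xi)
= \sum_{j\in\mathbb Z} \widehat{\phi_j}(\xi)\,\widehat{f}(\xi)
= \Big(\sum_{j\in\mathbb Z}\widehat{\phi_j}(\xi)\Big)\widehat f(\xi),
\end{align*}
which is not quite $\widehat f$. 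So instead I would set things up with the square partition: use $\sum_j \widehat{\phi_j}^2 = 1$, and note $\widehat{\varphi_j}\,\widehat{\varphi_j}\,\widehat{\phi_j}\,\widehat{\phi_j} = \widehat{\phi_j}\,\widehat{\phi_j} = \widehat{\phi_j}^2$, hence $\sum_j \widehat{\varphi_j}^2\,\widehat{\phi_j}^2 = 1$. Taking inverse Fourier transforms, $f = \sum_j \varphi_j * \varphi_j * \phi_j * \phi_j * f$. Absorbing one copy of $\phi_j$ into a redefined last factor (or, more cleanly, simply relabelling: replace the Schwartz function called ``$\phi$'' in the statement by $\phi * \phi$, whose Fourier transform is $\widehat\phi^{\,2} \in C^\infty_0$) yields exactly \eqref{eq:repro}: $f = \sum_j \varphi_j * \varphi_j * \phi_j * f$ with $\widehat{\phi} \in C^\infty_0(\Rd)$ and $\varphi \in C^\infty_0(\Rd)$ having cancellation of order $N$.

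Finally I would address convergence: the partial sums $\sum_{|j|\le M} \varphi_j*\varphi_j*\phi_j*f$ have Fourier transforms $\big(\sum_{|j|\le M}\widehat{\varphi_j}^2\widehat{\phi_j}\big)\widehat f$, which are uniformly bounded by $\|\widehat f\|_2$ (the multipliers are uniformly bounded by the finite-overlap property of the annuli) and converge pointwise a.e.\ to $\widehat f$, so by dominated convergence they converge in $L^2$; by Plancherel the spatial partial sums converge to $f$ in $L^2(\Rd)$, which is the asserted mode of convergence. I do not expect a genuine obstacle here — this is the classical Littlewood--Paley/Calderón reproducing formula — the only point requiring a little care is bookkeeping: making sure the same $\varphi$ works simultaneously for all $j$ (guaranteed by the scale-invariance of the construction, since $\widehat{\varphi_j}\equiv 1$ on $\operatorname{supp}\widehat{\phi_j}$ for every $j$ at once) and that the order-$N$ cancellation of $\varphi$ is correctly read off from the flatness of $\widehat\varphi$ at the origin.
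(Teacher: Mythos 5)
There is a genuine gap, and it sits exactly at the one point where the theorem is nontrivial. The statement requires $\varphi\in C^\infty_0(\Rd)$, i.e.\ $\varphi$ \emph{compactly supported in physical space} (this is essential later: in Theorem \ref{thm1.2} the atom $b_W=\sum_{j\le\ell(W)}\varphi_j*\varphi_j*b_{W,j}$ must be supported in a fixed enlargement of $W$, which only works because $\varphi_j$ has spatial support of size $\sim 2^j\le 2^{\ell(W)}$), while only $\phi$ is allowed to be band-limited ($\widehat\phi\in C^\infty_0$). Your construction takes $\widehat\varphi$ to be a cutoff equal to $1$ on the annulus $\{\tfrac12\le|\xi|\le2\}$ and vanishing near the origin. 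Such a $\varphi$ cannot lie in $C^\infty_0(\Rd)$: by Paley--Wiener, compact spatial support forces $\widehat\varphi$ to be real-analytic (indeed entire), so it cannot be identically $1$ on an open annulus, nor identically $0$ near the origin, without being constant, which contradicts Riemann--Lebesgue. What your argument actually produces is a Schwartz, band-limited $\varphi$, i.e.\ you have interchanged the roles of space and frequency support; with that $\varphi$ the identity \eqref{eq:repro} is the classical Littlewood--Paley resolution, but it does not prove the stated theorem, and it would break the support property of the atoms that the whole of Section \ref{s:4} relies on. (The cancellation bookkeeping and the $L^2$-convergence argument in your last paragraph are fine; they are not where the difficulty lies.)

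The standard repair is to build $\varphi$ first on the spatial side and let $\phi$ absorb the Fourier-side correction. For instance, take $u\in C^\infty_0(\Rd)$ radial, $u\ge0$, with small support and $\int u=1$ so that $\widehat u\ge\tfrac12$ on $\{|\xi|\le2\}$, and set $\varphi:=\Delta^{M}u$ with $2M>N$. Then $\varphi\in C^\infty_0(\Rd)$ has vanishing moments up to order $N$, and $\widehat\varphi(\xi)=(-4\pi^2|\xi|^2)^{M}\widehat u(\xi)$ satisfies $\widehat\varphi(\xi)^2\ge c>0$ on the annulus $\{\tfrac12\le|\xi|\le2\}$. Now choose a standard Littlewood--Paley function $\widehat\theta\in C^\infty_0$ supported in that annulus with $\sum_{j\in\mathbb Z}\widehat\theta(2^{j}\xi)=1$ for $\xi\neq0$, and define $\widehat\phi:=\widehat\theta/\widehat\varphi^{\,2}$, which is in $C^\infty_0(\Rd)$ precisely because $\widehat\varphi^{\,2}$ is smooth and bounded below on $\supp\widehat\theta$. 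Then $\sum_j\widehat\varphi(2^j\xi)^2\widehat\phi(2^j\xi)=\sum_j\widehat\theta(2^j\xi)=1$ a.e., and the $L^2$-convergence of the partial sums follows by Plancherel and dominated convergence exactly as in your final paragraph. (The paper itself states Theorem \ref{rep} without proof as a standard Calder\'on-type reproducing formula; the above is the expected construction, and it is the construction of $\varphi$ with compact \emph{spatial} support that your proposal misses.)
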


We let $\mathcal D$ be the system of dyadic cubes in $\G$
Moreover, let $\mathcal D^j$, $j\in\mathbb Z$, be the subset of $\mathcal D$ such that 
the side-length of $Q$ is
$ 2^{j}$.

\begin{theorem}\label{thm1.2}
    Given $f\in H^1(\Rd)\cap L^2(\G)$, we can decompose 
     \begin{align}\label{atom decomposition}
    f=\sum_{W\in{{\mathcal D}}} b_W,
 \end{align}   
    where this sum converges in $H^1(\Rd)$, each $b_W$ is in $L^2(\G)$ and supported in 20 times enlargement of $W$, and
     \begin{align}\label{atom norm}
   \sum_{W\in{{\mathcal D}}} |W|^{\frac{1}{2}}\cdot \|b_W\|_{L^2(\Rd)}\lesssim\|f\|_{H^1(\Rd)}.
\end{align}
    Moreover, each atom $b_W$ 
    has the following properties.
     \begin{align}\label{atom decomposition further}
b_W=\sum_{j\leq \ell(W)}\varphi_j *\varphi_j *b_{W,j},
 \end{align}   
    where $\varphi_j $ is as in  \eqref{eq:repro},  $\ell(W)$ is the integer such that the side-length of $W$ is $2^{\ell(W)}$, 
    $b_{W,j}$ is supported in $W$ and 
    furthermore, 
    \begin{align}\label{atom norm further}
   \sum_{W\in{{\mathcal D}}} |W|^{\frac{1}{2}}\bigg
(\sum_{j:j\leq \ell(W)} \|b_{W,j}\|^2_{L^2(\G)}\bigg)^{\frac{1}{2}} \lesssim \|f\|_{H^1(\Rd)}.  
    \end{align}
\end{theorem}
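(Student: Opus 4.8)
\textbf{Proof plan for Theorem \ref{thm1.2}.} The plan is to deduce the special atomic decomposition from the Representation Theorem \ref{rep} together with a Calder\'on--Zygmund-type stopping time on the square function $S(f)$. First I would fix the integer $N$ large, take $\varphi$ and $\phi$ as in Theorem \ref{rep}, and write $f=\sum_{j\in\mathbb Z}\varphi_j*\varphi_j*\phi_j*f$ in $L^2(\Rd)$. For each $\lambda=2^m$, $m\in\mathbb Z$, set $\Omega_m:=\{x:S(f)(x)>2^m\}$, which is open with $\sum_m 2^m|\Omega_m|\approx\|S(f)\|_{L^1}\approx\|f\|_{H^1}$. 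Perform a Whitney decomposition of each $\Omega_m$ into dyadic cubes; this produces, after organising the pieces $\phi_j*f$ according to which $\Omega_m$ "captures" the relevant dyadic cube at scale $2^j$, a partition of the index set $\{(j,Q):Q\in\mathcal D^j\}$ into families indexed by a top cube $W\in\mathcal D$. Concretely, one assigns the scale-$j$ piece sitting over a dyadic cube $Q\in\mathcal D^j$ to the minimal Whitney cube $W$ (from some $\Omega_m$) containing $Q$, and one sets $b_W:=\sum_{j\le\ell(W)}\varphi_j*\varphi_j*b_{W,j}$ where $b_{W,j}:=(\phi_j*f)\cdot\chi_{Q}$ summed over the dyadic $Q\in\mathcal D^j$ assigned to $W$; this $b_{W,j}$ is supported in $W$, and since $\varphi_j\in C_0^\infty$ has compact support at scale $2^j\le 2^{\ell(W)}$, the double convolution $\varphi_j*\varphi_j*b_{W,j}$ is supported in the $20$-fold enlargement of $W$.

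Next I would verify the quantitative bounds. For the refined bound \eqref{atom norm further}: by construction $\sum_{j\le\ell(W)}\|b_{W,j}\|_{L^2}^2=\int_{W}\sum_{j:\,Q(j,x)\text{ assigned to }W}|\phi_j*f(x)|^2\,dx$, and since the dyadic squares over which we sum are (essentially) pairwise disjoint in the relevant region and each point $x$ is counted with controlled multiplicity, $\sum_W|W|^{1/2}(\sum_j\|b_{W,j}\|_{L^2}^2)^{1/2}$ telescopes against the size of the level sets. Here the standard Calder\'on--Zygmund estimate gives: over the top cube $W$ arising from the Whitney decomposition of $\Omega_m$, one has $\sum_{j:\,\text{assigned to }W}|\phi_j*f(x)|^2\lesssim S(f)(x)^2$ pointwise, and $S(f)(x)\lesssim 2^m$ on (a dilate of) $W$ outside the next level set, while the part on $\Omega_{m+1}$ is handled by passing to the finer Whitney cubes; summing $|W|^{1/2}\cdot 2^m\lesssim 2^m|W|$ over the Whitney cubes of $\Omega_m$ and then over $m$ yields $\sum_m 2^m|\Omega_m|\approx\|f\|_{H^1}$. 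The plain bound \eqref{atom norm} then follows from \eqref{atom norm further} by the almost-orthogonality Lemma \ref{ort} (with $p=2$, or more simply by Plancherel/Cauchy--Schwarz since $\varphi$ has high-order cancellation): $\|b_W\|_{L^2}=\|\sum_{j\le\ell(W)}\varphi_j*\varphi_j*b_{W,j}\|_{L^2}\lesssim(\sum_j\|b_{W,j}\|_{L^2}^2)^{1/2}$, so \eqref{atom norm} is immediate from \eqref{atom norm further}.

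Finally, for the convergence of $\sum_{W\in\mathcal D}b_W$ to $f$ in $H^1(\Rd)$: since the families $\{(j,Q)\}$ partition the full index set appearing in \eqref{eq:repro}, the partial sums of $\sum_W b_W$ over finite collections of top cubes are exactly partial sums of the reproducing formula \eqref{eq:repro} regrouped, which converge in $L^2$ to $f$; upgrading $L^2$ convergence to $H^1$ convergence uses \eqref{atom norm} together with the fact that tails $\sum_{W:\,\ell(W)<-M\text{ or }|W|>2^{M}}|W|^{1/2}\|b_W\|_{L^2}\to0$ as $M\to\infty$, which follows from the absolute summability in \eqref{atom norm}. I expect the main obstacle to be the bookkeeping in the stopping-time step: one must set up the assignment of each scale-$j$ dyadic piece to a unique top cube $W$ so that simultaneously (i) $b_{W,j}$ is supported in $W$ with $j\le\ell(W)$, (ii) the supports of the resulting atoms $b_W$ have bounded overlap so the square-function sums telescope against $\sum_m 2^m|\Omega_m|$ rather than blowing up, and (iii) no index pair $(j,Q)$ from \eqref{eq:repro} is lost or double-counted, so that the reconstruction $f=\sum_W b_W$ is exact. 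This is precisely the argument of Seeger--Wright \cite{MR2918096}, and once the combinatorics of the Whitney/stopping-time decomposition is fixed, the estimates \eqref{atom norm}--\eqref{atom norm further} are routine consequences of Littlewood--Paley theory and the compact support of $\varphi$.
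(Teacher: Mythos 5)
Your overall skeleton (reproducing formula from Theorem \ref{rep}, stopping time on level sets of a square function, Whitney decomposition, assignment of the scale-$j$ pieces to top cubes $W$, then Lemma \ref{ort} to pass from \eqref{atom norm further} to \eqref{atom norm}) is the same Chang--Fefferman/Seeger--Wright scheme the paper follows, but there is a genuine gap at the heart of the size estimate \eqref{atom norm further}. You work with the plain square function $S(f)$ and assign the whole piece $(\phi_j*f)\chi_Q$ to a single top cube by a containment rule, and then claim that the contribution of $Q\cap\Omega_{m+1}$ is ``handled by passing to the finer Whitney cubes.'' It cannot be: once $(\phi_j*f)\chi_Q$ is placed in $b_{W,j}$ it is not split among finer atoms, and on $Q\cap\Omega_{m+1}$ the values of $\phi_j*f$ are in no way controlled by $2^{m+1}$, nor by the values of $S(f)$ on the good part of $W$, since $S(f)$ only sees $|\phi_j*f|$ at the same point. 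So the chain $\|(\phi_j*f)\chi_Q\|_{L^2}^2\lesssim 2^{2m}|Q|$, which is what your telescoping against $\sum_m 2^m|\Omega_m|$ requires, is not justified as written.

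The paper's proof supplies exactly the two missing ingredients. First, the stopping rule is not containment but the majority condition: $R\in\mathcal D^j$ is put in generation $\kappa$ iff $|R\cap\Omega_\kappa|\ge\frac12|R|$ and $|R\cap\Omega_{\kappa+1}|<\frac12|R|$, so that at least half of $R$ lies outside $\Omega_{\kappa+1}$ (and, since $R$ need not be contained in $\Omega_\kappa$, the Whitney cubes must be taken in the enlarged set $\widetilde\Omega_\kappa=\{M(\chi_{\Omega_\kappa})>100^{-6}\}$ to guarantee $R\subset W$ for some $W\in\mathcal W_\kappa$ --- your ``minimal Whitney cube of $\Omega_m$ containing $Q$'' need not exist). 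Second, the level sets are taken for the Peetre-type maximal square function $S_{max}$, not $S$: because $\sup_{|x-y|<2^j}|f_j(y)|$ is essentially constant on a cube of side $2^j$, one can bound
\begin{align*}
\|f_j\chi_R\|_{L^2}^2\le |R|\inf_{x\in R}\sup_{|x-y|<2^j}|f_j(y)|^2
\lesssim \int_{R\setminus\Omega_{\kappa+1}}\sup_{|x-y|<2^j}|f_j(y)|^2\,dx,
\end{align*}
transferring the estimate to the good set where $S_{max}(f)\le 2^{\kappa+1}$; this is the step that makes the sum over generations come out to $\sum_\kappa 2^\kappa|\Omega_\kappa|\lesssim\|f\|_{H^1}$. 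Without the majority rule and the maximal (non-pointwise) square function, your pointwise-transfer step fails, and this is precisely the bookkeeping obstacle you flagged but did not resolve. The remaining parts of your plan (support of $b_W$ in $W^*$ since $j\le\ell(W)$, $\|b_W\|_{L^2}\lesssim(\sum_j\|b_{W,j}\|_{L^2}^2)^{1/2}$ via Lemma \ref{ort}, and the reconstruction $f=\sum_W b_W$ from exact partition of the index pairs) do match the paper.
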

\begin{proof}

Suppose $f\in H^1(\Rd)\cap L^2(\G)$. 
By the reproducing formula in Theorem \ref{rep}, we have
\begin{align*}
f(x)=\sum_{
j\in\mathbb{Z}}\varphi_j *\varphi_j *f_j (x)
\end{align*}
with $$f_j =\phi_j *f.$$
Next, we apply Chang and Fefferman's idea \cite{CF} to get the further decomposition. 
For all $\kappa, j\in\mathbb{Z}$, define 
 \begin{eqnarray*}
    \Omega_\kappa&:=&\{x\in \G: S_{max}(f)(x)>2^\kappa \},\\
    B_\kappa&=&\bigcup_j B^{j}_\kappa, \\
    B^{j}_\kappa&:=&\Big\{R\in\D^{j}:\
        |R\cap \Omega_\kappa|\geq{\frac{1}{2}}|R|,\  | R\cap \Omega_{\kappa+1}|< {\frac{1}{2}}|R| \Big\}, {\rm\ and}\\
\widetilde{\Omega}_\kappa&:=&\Big\{x\in \G: M(\chi_{\Omega_\kappa})(x)>{\frac{1}{100^{6}}} \Big\},
\end{eqnarray*}
where $M$ denotes the Hardy--Littlewood maximal function in $\Rd$.

By using Chebyshev's inequality and the $L^2(\G)$-boundedness of $M$, we see that 
\begin{align}\label{tilde omega measure}
|\widetilde{\Omega}_\kappa|\lesssim |\Omega_\kappa|,
\end{align}
where the implicit constant is independent of $\kappa$.

Since $\widetilde{\Omega}_\kappa$ is an open set in $\G$,  by Whitney decomposition, it can be decomposed as the union of maximal dyadic cubes (hence disjoint).
 We denote by $\mathcal W_\kappa$ the collection of all such Whitney cubes associated to $\widetilde{\Omega}_\kappa$, that is,
 $$\mathcal W_\kappa=\{ W \in\D: {\rm maximal\ in\ the\ sense\ that }\ Dil_{{20}}\circ W\subset \widetilde{\Omega}_\kappa\}.$$
 Then we have
 \begin{align}\label{tilde omegak decomp}
\widetilde{\Omega}_\kappa=\bigcup_{W\in \mathcal W_\kappa} W,
 \end{align}

 In what follows, for every dyadic cube $R$ and $W$, we denote $R^*=Dil_{{20}}\circ R$
 and $W^*=Dil_{{20}}\circ W$.
 
 Note that for every dyadic cubes $R\in B^j_\kappa$, we have
 $$
 \frac{|R^* \cap\Omega_\kappa|}{|R^*|}\geq  \frac{|R \cap\Omega_\kappa|}{20^{6}|R|}\geq \frac{1}{100^{6}},
 $$
 which implies that $R^*\in\widetilde{\Omega}_\kappa$ and thus there is a unique $W\in \mathcal W_\kappa$ such that $R\subset W.$ Thus, we can further decompose $f_{j}$ by the following
 \begin{align*}
 f_{j}(x)
 &={\sum_{k\in\mathbb{Z}}} \sum_{W\in \mathcal W_\kappa}\sum_{\substack{R\in B^j_{\kappa}\\ R\subset W}} f_{j}(x)\cdot\chi_{R}(x).
 \end{align*}

 We then define
 \begin{align}\label{bWj}
 b_{W,j}(x):=\sum_{\substack{R\in B^j_{\kappa}\\ R\subset W}}f_{j}(x)\cdot\chi_{R}(x)  .
 \end{align}

 Then we further have
 \begin{align}\label{repro e1}
f(x)&=\sum_{
j\in\mathbb{Z}}\varphi_j *\varphi_j *f_j (x)
=\sum_{j\in\mathbb{Z}}\sum_{\kappa\in\mathbb{Z}} \sum_{W\in \mathcal W_\kappa}\varphi_j *\varphi_j *b_{W,j} (x)\notag\\
&=\sum_{\kappa\in\mathbb{Z}}\sum_{W\in \mathcal W_\kappa}\sum_{j:j\leq \ell(W)}\varphi_j *\varphi_j *b_{W,j} (x).
\end{align}
We define 
 \begin{align}\label{bW}
 b_{W}(x)=\sum_{j:j\leq \ell(W)}\varphi_j *\varphi_j *b_{W,j} (x).
 \end{align}
 Then based on \eqref{repro e1}, we have
  \begin{align*}
f(x)=\sum_{\kappa\in\mathbb{Z}}\sum_{W\in \mathcal W_\kappa}b_{W} (x).
\end{align*}
These imply that the atomic decomposition \eqref{atom decomposition} and \eqref{atom decomposition further} hold.

\medskip
We now verify the size estimate \eqref{atom norm further}. To see this,
noting that by using the definition of $b_{W,j}$ in \eqref{bWj} and using the fact that the family of cubes
 $\{R\}_{R\in B^j_{\kappa}, R\subset W}$
are pairwise disjoint, 
we have
    \begin{align}
    &\bigg
(\sum_{j:j\leq \ell(W)} \| b_{W,j} \|^2_{L^2(\G)}\bigg)^{\frac{1}{2}}\nonumber\\
&=\bigg
(\sum_{j:j\leq \ell(W)}\sum_{\substack{R\in B^j_{\kappa}\\ R\subset W}} \|f_{j}(\cdot)\chi_{R}(\cdot)\|^2_{L^2(\G)}\bigg)^{\frac{1}{2}}\nonumber\\
&= 
\bigg(\sum_{j:j\leq \ell(W)}\int_{\bigcup_{\substack{R\in B^j_{\kappa}\\R \subset W}}R}|f_{j}(x)|^2 dx\bigg)^{\frac{1}{2}}\nonumber\\
&\leq 
\bigg(\sum_{j:j\leq \ell(W)}\sum_{\substack{R\in B^j_{\kappa}\\ R\subset W}}|R|\cdot\inf_{x\in R}\sup_{|x-y|<2^{j}}|f_j(y)|^2\bigg)^{\frac{1}{2}}\nonumber\\
&\lesssim 
\bigg(\sum_{j:j\leq \ell(W)} \sum_{\substack{R\in B^j_{\kappa}\\ R\subset W}}|R\backslash\Omega_{\kappa+1}|\cdot\inf_{x\in R\backslash\Omega_{\kappa+1}}\sup_{|x-y|<2^{j}}|f_j(y)|^2\bigg)^{\frac{1}{2}}\nonumber\\
&\lesssim 
\bigg(\sum_{j:j\leq \ell(W)}\sum_{\substack{R\in B^j_{\kappa}\\ R\subset W}}
\int_{R\backslash\Omega_{\kappa+1}}\sup_{|x-y|<2^{j}}|f_j(y)|^2dx\bigg)^{\frac{1}{2}}\nonumber\\
&\lesssim 
\bigg(\sum_{j:j\leq \ell(W)}\int_{W\backslash\Omega_{\kappa+1}}\sup_{|x-y|<2^{j}}|f_j(y)|^2dx\bigg)^{\frac{1}{2}}\nonumber\\
&\lesssim 
\bigg(\int_{W\backslash\Omega_{\kappa+1}}\sum_{j:j\leq \ell(W)}\sup_{|x-y|<2^{j}}|f_j(y)|^2dx\bigg)^{\frac{1}{2}}\nonumber\\
&\leq 
\bigg(\int_{W\backslash\Omega_{\kappa+1}}S_{max}(f)(x)^2 dx\bigg)^{\frac{1}{2}},
\label{equ key1}
\end{align}
where in the second inequality, we use the fact that 
$|R|\lesssim |R\backslash \Omega_{\kappa+1}|$. This follows from the definition of $B^j_{\kappa}$: that is, 
when $R\in B^j_{\kappa}$, $|R\cap\Omega_{\kappa+1}|<{1\over2}|R|$,  which gives 
$|R\backslash \Omega_{\kappa+1}|\geq{1\over2}|R|$.

Then by using \eqref{equ key1}, we further have
\begin{align}
&   \sum_{\kappa\in\mathbb{Z}}\sum_{W\in \mathcal W_\kappa}|W|^{\frac{1}{2}}\bigg
(\sum_{j:j\leq \ell(W)} \| b_{W,j} \|^2_{L^2(\G)}\bigg)^{\frac{1}{2}}\nonumber\\
%&=\sum_{\kappa\in\mathbb{Z}}\sum_{W\in \mathcal W_\kappa}|W|^{\frac{1}{2}}\bigg(\sum_{j:j\leq \ell(W)}\sum_{R\in B^j_{\kappa}, R\subset W} \|f_{j}(\cdot)\chi_{R}(\cdot)\|^2_{L^2(\G)}\bigg)^{\frac{1}{2}}\\
&\lesssim \sum_{\kappa\in\mathbb{Z}}\sum_{W\in \mathcal W_\kappa}|W|^{\frac{1}{2}}\cdot\bigg(\int_{W\backslash\Omega_{k+1}}S_{max}(f)(x)^2 dx\bigg)^{\frac{1}{2}}\nonumber\\
&\leq \sum_{\kappa\in\mathbb{Z}}\bigg(\sum_{W\in \mathcal W_\kappa}|W|\bigg)^{\frac{1}{2}}\cdot\bigg(\sum_{W\in \mathcal W_\kappa}\int_{W\backslash\Omega_{\kappa+1}}S_{max}(f)(x)^2 dx\bigg)^{\frac{1}{2}},\label{equ key2}
\end{align}
and
the last inequality follows from H\"older's inequality. 

Next, by noting the Whitney decomposition \eqref{tilde omegak decomp} and the measure estimate \eqref{tilde omega measure}, we obtain that the right-hand side of the above inequality \eqref{equ key2} is bounded by 
\begin{align*}
 \sum_{\kappa\in\mathbb{Z}} |\widetilde{\Omega}_\kappa|^{\frac{1}{2}}\bigg(\int_{\widetilde{\Omega}_\kappa\backslash\Omega_{\kappa+1}}S_{max}(f)(x)^2 dx\bigg)^{\frac{1}{2}}
& \leq \sum_{\kappa\in\mathbb{Z}} |\widetilde{\Omega}_\kappa|^{\frac{1}{2}}\cdot 2^{\kappa+1}\cdot |\widetilde{\Omega}_\kappa|^{\frac{1}{2}}
\lesssim \sum_{\kappa\in\mathbb{Z}}2^\kappa |\Omega_\kappa|\\
&\lesssim \|S_{max}f\|_{L^1(\G)}.
\end{align*}

Therefore, \eqref{atom norm further} holds.

\medskip

From the proof of  \eqref{atom norm further}, we can also summarise
that  the following estimate holds
 \begin{align}\label{atom norm gammaW}
\sum_{\kappa\in\mathbb{Z}}\sum_{W\in \mathcal W_\kappa}|W|^{\frac{1}{2}}\cdot \gamma_{W,\kappa}
\lesssim \|f\|_{H^1(\Rd)},
\end{align}
where
 \begin{align}\label{gammaW}
\gamma_{W,\kappa}:=\bigg(\sum_{j:j\leq \ell(W)}\sum_{\substack{R\in B^j_{\kappa}\\ R\subset W}} \|f_{j}(\cdot)\chi_{R}(\cdot)\|^2_{L^2(\G)}\bigg)^{\frac{1}{2}}.
\end{align}
\medskip
Now it suffices to verify \eqref{atom norm}. From the definition of $b_W$ as in \eqref{bW} and by using Lemma \ref{ort}, we have 
\begin{align*}
\|b_W\|_{L^2(\Rd)}&:=\bigg\|\sum_{j:j\leq \ell(W)}\varphi_j *\varphi_j *b_{W,j}\bigg\|_{L^2(\Rd)}\\
&\lesssim\bigg(\sum_{j:j\leq \ell(W)}\Big\|\varphi_j *\Big(\sum_{\substack{R\in B^j_{\kappa}\\ R\subset W}}f_{j}(\cdot)\chi_{R}(\cdot)\Big)\Big\|^2_{L^2(\Rd)}\bigg)^{\frac{1}{2}}\\
&\lesssim\bigg(\sum_{j:j\leq \ell(W)}\Big\|\sum_{\substack{R\in B^j_{\kappa}\\ R\subset W}}f_{j}(\cdot)\chi_{R}(\cdot)\Big\|^2_{L^2(\Rd)}\bigg)^{\frac{1}{2}}\\
&=\bigg(\sum_{j:j\leq \ell(W)}\sum_{\substack{R\in B^j_{\kappa}\\ R\subset W}} \|f_{j}(\cdot)\chi_{R}(\cdot)\|^2_{L^2(\Rd)}\bigg)^{\frac{1}{2}}\\
&=\gamma_{W,\kappa}.
\end{align*}

The above estimate on $\|b_W\|_{L^2(\Rd)}$ implies that
\begin{align*}
\sum_\kappa\sum_{W\in\mathcal W_\kappa}|W|^{\frac{1}{2}}\cdot \|b_W\|_{L^2(\G)}&\lesssim \sum_\kappa\sum_{W\in\mathcal W_\kappa}|W|^{\frac{1}{2}}\cdot \gamma_{W,\kappa}
\lesssim\|f\|_{H
^1(\Rd)},
\end{align*}
where the last inequality follows from \eqref{atom norm gammaW}. Thus, \eqref{atom norm} holds.
%The proof is complete.
%\end{proof}

The proof of Theorem \ref{thm1.2} is complete.
\end{proof}

%%%%%%%%%%%%%%%%%%%%%%%%%%%%%

\section{The Endpoint Estimate: Proof of $(2)$ in Theorem \ref{thm0}}\label{s:4}
\setcounter{equation}{0}

The statement (1) in Theorem \ref{thm0} is a direct corollary of Theorem \ref{thm1}.
We leave it to the proof of Theorem \ref{thm1}.  We now prove the statement (2) in Theorem \ref{thm0}. It suffices show that for all $\delta\in(0,1/2)$ and for all $\lambda>0$,
\begin{align}\label{key thm0}
\Big|\big\{x\in \Rd:  \sup_  k  |f *_\delta \sigma_  k  (x)|>\lambda\big\}\Big|\lesssim\frac{1}{\lambda}\|f\|_{H^1(\Rd)}.
\end{align}

For each dyadic cube $W\in\mathcal D$, we write $W=c_W + B_{2^{\ell(W)}}$, where $c_W$ represents the center of $W$ and $B_{2^{\ell(W)}}$ represents the dyadic cube centered at the origin with side-length $2^{\ell(W)}$.

To continue, recall that from Section \ref{s:3}, we have
$$f(x)=\sum_{\kappa\in\mathbb{Z}}\sum_{W\in \mathcal W_\kappa} b_W (x).$$  
  
 Then 
\begin{align}\label{step1 in proof}
&\Big|\big\{x\in \Rd:  \sup_  k  |f *_\delta \sigma_  k  (x)|>\lambda\big\}\Big|\nonumber\\
&\leq \Bigg|\bigg\{x\in \Rd:  \sup_  k  \Big|\sum_{\kappa\in\mathbb{Z}}
\sum_{\substack{W\in \mathcal W_\kappa\\ 2^k\delta\leq 2^{\ell(W)}}} b_W *_\delta \sigma_  k  (x)\Big|>\lambda\bigg\}\Bigg| \nonumber\\
&\qquad+ \Bigg|\bigg\{x\in \Rd:  \sup_  k  \Big|\sum_{\kappa\in\mathbb{Z}}\sum_{\substack{W\in \mathcal W_\kappa\\ 2^k\delta> 2^{\ell(W)}}}  b_W *_\delta \sigma_  k  (x)\Big|>\lambda\bigg\}\Bigg|\nonumber\\
&=:{\bf I}+{\bf II}.
\end{align}

\subsection{Estimate of {\bf I}}    
We aim to prove that for all $\lambda>0$,
  \begin{align}\label{key thm0 1}
 {\bf I} \lesssim\frac{1}{\lambda}\|f\|_{H^1(\Rd)}.
\end{align}
Define $\widetilde{\tau}(W)$ to be the smallest integer such that
\begin{align}\label{mini 1}
2^{\widetilde{\tau}(W)\cdot (d-1)} \cdot 2^{\ell(W)} \geq \frac{1}{\lambda} \cdot|W|^{\frac{1}{2}}\cdot \gamma_{W,\kappa},
\end{align}
where 
$\gamma_{W,\kappa}$ is given in \eqref{gammaW}.

Let $$\tau(W):=\max\{\widetilde{\tau}(W),\, {\ell(W)}\}.$$ 

To prove \eqref{key thm0}, we decompose {\bf I} into three parts:
\begin{align*}
Term_{1,1}(x)&=\sup_{  k \in\mathbb{Z}}\Big|\sum_{\kappa\in\mathbb Z}\  \sum_{\substack{W\in\mathcal W_\kappa\\2^k\delta\leq 2^{\ell(W)}\\ \tau(W)<  k }}b_{W}*_\delta \sigma_  k (x)\Big|\, ,
\\
Term_{1,2}(x)&=\sup_{  k \in\mathbb{Z}}\Big|\sum_{\kappa\in\mathbb Z}\  \sum_{\substack{W\in\mathcal W_\kappa\\2^k\delta\leq 2^{\ell(W)}\\ \ell(W)<  k \leq \tau(W)}}b_{W}*_\delta \sigma_  k (x)\Big|,
\\
Term_{1,3}(x)&=\sup_{  k \in\mathbb{Z}}\Big| \sum_{\kappa\in\mathbb Z}\  \sum_{\substack{W\in\mathcal W_\kappa\\2^k\delta\leq 2^{\ell(W)}  \\ k \leq \ell(W)}}b_{W}*_\delta \sigma_  k (x)\Big|.
\end{align*}

\subsubsection{$Term_{1,1}$: Almost Orthogonality and $L^2$-Boundedness}

Now, we move back to the estimate of $Term_{1,1}$. From the definition, we have  
\begin{align*}
&\|Term_{1,1}\|^2_{L^2(\Rd)}\\
&\leq\bigg\|\sup_{k\in\mathbb{Z}}\Big| \sum_{\kappa\in\mathbb Z}\  \sum_{\substack{W\in\mathcal W_\kappa\\2^k\delta\leq 2^{\ell(W)}\\ \tau(W)<  k }}\sum_{j:j\leq \ell(W)}\varphi_j  *\varphi_j*\Big(\sum_{\substack{R\in B^j_{\kappa}\\ R\subset W}} f_{j}(\cdot)\chi_{R}(\cdot)\Big)*_\delta\sigma_{k}\Big|\bigg\|^2_{L^2(\Rd)}\\ 
&=\bigg\|\sup_{k\in\mathbb{Z}}\Big|\sum_{L\in\mathbb{Z}}\ \ \sum_{\kappa\in\mathbb Z}\sum_{\substack{ W\in\mathcal W_\kappa\\2^k\delta\leq 2^{\ell(W)}\\ \tau(W)<k\\ \ell(W)=L}}\ \sum_{n\geq 0}  \varphi_{L-n}  *\Big(\sum_{\substack{R\in B^{L-n}_{\kappa}\\ R \subset W}} f_{L-n}(\cdot)\chi_{R}(\cdot)\Big)*\varphi_{L-n}*_\delta\sigma_{k}\Big|\bigg\|^2_{L^2(\Rd)}\\ 
&=\Bigg\|\sup_{k\in\mathbb{Z}}\bigg|\sum_{n\geq 0}\bigg
(\sum_{L\in\mathbb{Z}} \ \ \sum_{\kappa\in\mathbb Z}\sum_{\substack{ W\in\mathcal W_\kappa\\2^k\delta\leq 2^{\ell(W)}\\\tau(W)<k\\ \ell(W)=L+n}}\   \varphi_{L}  *\Big(\sum_{\substack{R\in B^{L}_{\kappa}\\ R \subset W}} f_{L}(\cdot)\chi_{R}(\cdot)\Big)*\varphi_{L}*_\delta\sigma_{k}\bigg)\bigg|\Bigg\|^2_{L^2(\Rd)}\\
&=:\bigg\|\sup_{k\in\mathbb{Z}}\Big|\sum_{n\geq 0}I_{n,k}\Big|\bigg\|^2_{L^2(\Rd)},
\end{align*}
where the second equality follows from the substitution and the third one from changing the order of summation. To continue, we further have
\begin{align*}
\|Term_{1,1}\|^2_{L^2(\Rd)}
&\leq\bigg\|\sum_{n\geq 0}\sup_{k\in\mathbb{Z}}\big|I_{n,k}\big|\bigg\|^2_{L^2(\Rd)}\\
&\leq \bigg(\sum_{n\geq 0}\Big\|\sup_{k\in\mathbb{Z}}\big|I_{n,k}\big|\Big\|_{L^2(\Rd)}\bigg)^2\\
&\leq\bigg(\sum_{n\geq 0}\Big(\sum_{k\in\mathbb{Z}}\|I_{n,k}\|^2_{L^2(\Rd)}\Big)^{\frac{1}{2}}\bigg)^2.
\end{align*}

We now estimate $\|I_{n,k}\|_{L^2(\Rd)}$ for each $n$ and $k$. By using Lemma \ref{ort}
 and Lemma \ref{lem ortho 2}, we have
\begin{align*}
&\|I_{n,k}\|_{L^2(\Rd)}=\bigg\|\sum_{L\in\mathbb{Z}}\  \varphi_{L} *\bigg[ \sum_{\kappa\in\mathbb Z}\sum_{\substack{ W\in\mathcal W_\kappa\\2^k\delta\leq 2^{\ell(W)}\\ \tau(W)<k\\ \ell(W)=L+n}}\   \Big(\sum_{\substack{R\in B^{L}_{\kappa}\\R \subset W}} f_{L}(\cdot)\chi_{R}(\cdot)\Big)*\varphi_{L}*_\delta\sigma_{k}\bigg]\ \ \bigg\|_{L^2(\Rd)}\\
&\hskip-.55cm\underset{\text{Lemma}\,\ref{ort}}{\lesssim} \bigg(\sum_{L\in\mathbb{Z}}\bigg\| \ \sum_{\kappa\in\mathbb Z}\sum_{\substack{ W\in\mathcal W_\kappa\\2^k\delta\leq 2^{\ell(W)}\\ \tau(W)<k\\ \ell(W)=L+n}}\  \Big(\sum_{\substack{R\in B^{L}_{\kappa}\\R \subset W}} f_{L}(\cdot)\chi_{R}(\cdot)\Big)*\varphi_{L}*_\delta\sigma_{k} \bigg\|^2_{L^2(\Rd)}\bigg)^{\frac{1}{2}}\\
&\leq \bigg(\sum_{L\in\mathbb{Z}}\|\varphi_{L}*_\delta\sigma_{k}\|^2_{L^2(\Rd) \to L^2(\Rd)}\cdot \bigg\|\ \ \sum_{\kappa\in\mathbb Z} \sum_{\substack{ W\in\mathcal W_\kappa\\2^k\delta\leq 2^{\ell(W)}\\ \tau(W)<k\\ \ell(W)=L+n}}\  \Big(\sum_{\substack{R\in B^{L}_{\kappa}\\R \subset W}} f_{L}(\cdot)\chi_{R}(\cdot)\Big)\bigg\|^2_{L^2(\Rd)}\bigg)^{\frac{1}{2}}\\
&=\bigg(\sum_{L\in\mathbb{Z}}\|\varphi_{L-k}*_\delta\sigma\|^2_{L^2(\Rd) \to L^2(\Rd)}\cdot \bigg\|\ \  \sum_{\kappa\in\mathbb Z}\sum_{\substack{ W\in\mathcal W_\kappa\\2^k\delta\leq 2^{\ell(W)}\\ \tau(W)<k\\ \ell(W)=L+n}}\  \Big(\sum_{\substack{R\in B^{L}_{\kappa}\\R \subset W}} f_{L}(\cdot)\chi_{R}(\cdot)\Big)\bigg\|^2_{L^2(\Rd)}\bigg)^{\frac{1}{2}}\\
&\hskip-.55cm\underset{Lemma\,\ref{lem ortho 2}}{\lesssim} \bigg(\sum_{L\in\mathbb{Z}} \min\{1,  2^{(L-k)(d-1)}\} \cdot\bigg\|\ \ \sum_{\kappa\in\mathbb Z}\sum_{\substack{ W\in\mathcal W_\kappa\\2^k\delta\leq 2^{\ell(W)}\\ \tau(W)<k\\ \ell(W)=L+n}}  \Big(\sum_{\substack{R\in B^{L}_{\kappa}\\R \subset W}} f_{L}(\cdot)\chi_{R}(\cdot)\Big)\bigg\|^2_{L^2(\Rd)}\bigg)^{\frac{1}{2}}\\
&\lesssim  \bigg(\sum_{\substack{L\in\mathbb{Z}\\L<k}} 2^{(L-k)(d-1)}\cdot\bigg\|\ \ \sum_{\kappa\in\mathbb Z}\sum_{\substack{ W\in\mathcal W_\kappa\\2^k\delta\leq 2^{\ell(W)}\\ \tau(W)<k\\ \ell(W)=L+n}}  \Big(\sum_{\substack{R\in B^{L}_{\kappa}\\R \subset W}} f_{L}(\cdot)\chi_{R}(\cdot)\Big)\bigg\|^2_{L^2(\Rd)}\bigg)^{\frac{1}{2}}\\
&\qquad+ \bigg(\sum_{\substack{L\in\mathbb{Z}\\L\geq k}}\bigg\|\ \ \sum_{\kappa\in\mathbb Z}\sum_{\substack{ W\in\mathcal W_\kappa\\2^k\delta\leq 2^{\ell(W)}\\ \tau(W)<k\\ \ell(W)=L+n}}  \Big(\sum_{\substack{R\in B^{L}_{\kappa}\\R \subset W}} f_{L}(\cdot)\chi_{R}(\cdot)\Big)\bigg\|^2_{L^2(\Rd)}\bigg)^{\frac{1}{2}}  .
\end{align*}
We first note that the second term on the right-hand side of the last inequality above does not exists, since it implies $\ell(W)>n+k>n+\tau(W)$, which contradicts to the definition of $\tau(W)$.
Now, we substitute this upper bound of $\|I_{n,k}\|_{L^2(\Rd)}$ back to the estimate of $Term_{1,1}$ above. Then it yields that
\begin{align*}
&\bigg(\sum_{k\in\mathbb{Z}}\|I_{n,k}\|^2_{L^2(\Rd)}\bigg)^{\frac{1}{2}}\\
&\lesssim \bigg(\sum_{k\in\mathbb{Z}} \sum_{\substack{L\in\mathbb{Z}\\L<k}} \,\sum_{\kappa\in\mathbb Z}\sum_{\substack{ W\in\mathcal W_\kappa\\2^k\delta\leq 2^{\ell(W)}\\ \tau(W)<k\\ \ell(W)=L+n}}  2^{(L-k)(d-1)}\cdot\bigg\|\sum_{\substack{R\in B^{L}_{\kappa}\\R \subset W}} f_{L}(\cdot)\chi_{R}(\cdot)\bigg\|^2_{L^2(\Rd)}\bigg)^{\frac{1}{2}}
\\
&=2^{-{n(d-1)\over2}}\cdot\bigg( \sum_{\kappa\in\mathbb Z}\sum_{W \in\mathcal W_\kappa}\sum_{\substack{k:2^k\delta\leq 2^{\ell(W)}\\k>\tau(W)\\\ell(W)<n+k}}2^{(\ell(W)-k)(d-1)}\cdot\bigg\|\sum_{\substack{R\in B^{\ell(W)-n}_{\kappa}\\R \subset W}} f_{\ell(W)-n}(\cdot)\chi_{R}(\cdot)\bigg\|^2_{L^2(\Rd)}\bigg)^{\frac{1}{2}}\\
&\lesssim 2^{-{n(d-1)\over2}}\cdot\bigg(\sum_{\kappa\in\mathbb Z}\sum_{W \in\mathcal W_\kappa}2^{(\ell(W)-\tau(W))(d-1)}\cdot \bigg\|\sum_{\substack{R\in B^{\ell(W)-n}_{\kappa}\\R \subset W}} f_{\ell(W)-n}(\cdot)\chi_{R}(\cdot)\bigg\|^2_{L^2(\Rd)}\bigg)^{\frac{1}{2}}\\
&\leq 2^{-{n(d-1)\over2}}\cdot\bigg(\sum_{\kappa\in\mathbb Z}\sum_{W \in\mathcal W_\kappa}2^{(\ell(W)-\tau(W))(d-1)}\cdot \gamma_{W,\kappa}^2\bigg)^{\frac{1}{2}}.
\end{align*}

Recalling the definition of the stopping time parameter $\widetilde{\tau}(W)$:
\begin{align*}
2^{\widetilde{\tau}(W)\cdot (d-1)} \cdot 2^{\ell(W)} \geq \frac{1}{\lambda} \cdot|W|^{\frac{1}{2}}\cdot \gamma_{W,\kappa},
\end{align*}
we have $$\gamma_{W,\kappa}\leq 2^{\widetilde{\tau}(W)\cdot (d-1)} \cdot  2^{\ell(W)}\cdot\lambda \cdot |W|^{-{1\over2}} \leq 2^{\tau(W)\cdot (d-1)} \cdot  2^{\ell(W)} \cdot\lambda \cdot |W|^{-{1\over2}},$$
where we use the fact that $\tau(W)\geq \widetilde{\tau}(W)$.
 
Hence,
\begin{align*}
&\bigg(\sum_{k\in\mathbb{Z}}\|I_{n,k}\|^2_{L^2(\Rd)}\bigg)^{\frac{1}{2}}\\
&\lesssim  2^{-{n(d-1)\over2}}\cdot\bigg(\sum_{\kappa\in\mathbb Z}\sum_{W \in\mathcal W_\kappa}2^{(\ell(W)-\tau(W))(d-1)}\cdot  2^{\tau(W)\cdot (d-1)} \cdot 2^{\ell(W)} \cdot\lambda \cdot |W|^{-{1\over2}} \cdot \gamma_{W,\kappa} \bigg)^{\frac{1}{2}}
\\
&\lesssim   2^{-{n(d-1)\over2}}\cdot\bigg(\sum_{\kappa\in\mathbb Z}\sum_{W \in\mathcal W_\kappa} \lambda  |W|^{1\over2}\gamma_{W,\kappa}\bigg)^{\frac{1}{2}},
 \end{align*}
which leads to
\begin{align}\label{Term11}
\frac{1}{\lambda^2}\|Term_{1,1}\|^2_{L^2(\Rd)}&\lesssim\frac{1}{\lambda^2}\bigg(\sum_{n\geq 0}
2^{-{n(d-1)\over2}}\cdot\bigg(\sum_{\kappa\in\mathbb Z}\sum_{W \in\mathcal W_\kappa} \lambda  |W|^{1\over2}\gamma_{W,\kappa}\bigg)^{\frac{1}{2}}\bigg)^2 \nonumber\\
&=\frac{1 }{\lambda^2}\cdot\lambda \sum_{\kappa\in\mathbb Z}\sum_{W \in\mathcal W_\kappa}\gamma_{W,\kappa}|W|^{\frac{1}{2}}\nonumber\\
&\lesssim \frac{1}{\lambda}\|f\|_{H^1(\Rd)},
\end{align}
where the last inequality follows from \eqref{atom norm gammaW} in the proof of Theorem \ref{thm1.2}. 

\subsubsection{$Term_{1,2}$: Exceptional Set}

Recall that  $b_{W}$ is supported in 20 times enlargement of $W$, denoted by $W^*$.
Hence, the function
$$
\sup_{  k \in\mathbb{Z}}\Big|\sum_{\kappa\in\mathbb Z}\  \sum_{\substack{W\in\mathcal W_\kappa\\2^k\delta\leq 2^{\ell(W)}\\ \ell(W)<  k \leq \tau(W)}}b_{W}*_\delta \sigma_  k (x)\Big|$$ 
is supported in the set 
$$
\mathcal{E}_{\bf I}:=\bigcup_{\kappa\in\mathbb Z}\  \bigcup_{W\in\mathcal W_\kappa}\bigcup_{\substack{k: \ell(W)<k\leq \tau(W)\\ 2^k\delta\leq 2^{\ell(W)}}}{\left(W^*+_\delta \supp(\sigma_k)\right)},
$$
{in which $W^*+_\delta \supp(\sigma_k)$ is defined to be the sum set of $W^*$ and the $2^k\delta$-neighborhood of $\supp(\sigma_k)$.} Note also that $W^*+_\delta \supp(\sigma_k)$ is contained in the {translation} of the $2^k$-dilation of a $C\cdot 2^{\ell(W)-k}$ neighborhood of $\supp(\sigma)$ for all $\ell(W)<k\leq \tau(W)$, that is
$$
W^*+_\delta \supp(\sigma_k)\subset c_W+Dil_{2^k}\circ\left(\supp(\sigma)+ Dil_C \circ B_{2^{\ell(W)-k}}\right).
$$
Since the Lebesgue measure is translation invariant, then we have
\begin{align*}
  \left|W^*+_\delta \supp(\sigma_k)\right|&\leq \left|Dil_{2^k}\circ\left(\supp(\sigma)+ Dil_C \circ B_{ 2^{\ell(W)-k}}\right)\right|\\
   &\leq 2^{kd}\cdot\left|\supp(\sigma)+ Dil_C \circ B_{ 2^{\ell(W)-k}}\right|.
\end{align*}
For all $r>0$, let $N_{\Sigma(r)}$ to be the minimal number of balls of radius $r$ needed to cover the support of $\sigma$ (see also \cite{ MR1333890}). Since the box-packing dimension of a sphere is $d-1$, then it turns out that $N_{\Sigma(r)}\simeq r^{-(d-1)}$ and hence for all $\ell(W)<k\leq\tau(W)$ with $2^k\delta\leq 2^{\ell(W)}$,
\begin{align*}
  \left|W^*+_\delta \supp(\sigma_k)\right|
  &\lesssim \underset{\text{dilation}}{2^{k\cdot d}}\cdot \underset{\text{number}}{N_{\Sigma(\delta+2^{(\ell(W)-k)})}}\cdot \underset{\text{volume}}{(\delta+2^{(\ell(W)-k)})^d} \\
   &\simeq {2^{k\cdot d}}\cdot \left[\delta+2^{(\ell(W)-k)}\right] \\
  &\leq2\cdot 2^{k(d-1)}\cdot 2^{\ell(W)} ,
\end{align*}
which implies that
\begin{align*}
\left|\mathcal{E}_{\bf I}\right|
&\leq2\cdot\sum_{\kappa\in\mathbb Z}\  \sum_{W\in\mathcal W_\kappa}\sum_{\substack{k: \ell(W)<k\leq \tau(W)\\ 2^k\delta\leq 2^{\ell(W)}}}  2^{k(d-1)}\cdot 2^{\ell(W)} \\
&\lesssim \sum_{\kappa\in\mathbb Z}\  \sum_{\substack{W\in\mathcal W_\kappa\\ \ell(W)< \tau(W)}} 2^{\tau(W)(d-1)} \cdot 2^{\ell(W)}.
\end{align*}

Then,  by the definition of $\tau(W)$, the stopping time argument via \eqref{mini}) and  \eqref{atom norm gammaW}, we can deduce that
\begin{align}\label{eeeeee}
\left|\mathcal{E}_{\bf I}\right|
&\lesssim 2^{d-1}\sum_{\kappa\in\mathbb Z}\  \sum_{\substack{W\in\mathcal W_\kappa\\ \ell(W)< \tau(W)}} 2^{(\tau(W)-1)(d-1)} 2^{\ell(W)}
\notag \\
&\lesssim\sum_{\kappa\in\mathbb Z}\  \sum_{\substack{W\in\mathcal W_\kappa\\ \ell(W)< \tau(W)}}\frac{1}{\lambda} \cdot|W|^{\frac{1}{2}}\cdot \gamma_{W,\kappa}\notag\\
&\lesssim \frac{1}{\lambda} \cdot\|f\|_{H^1(\Rd)}.
\end{align}

\subsubsection{$Term_{1,3}$: $L^2(\Rd)$-Estimate}

Recall that
\begin{align*}
Term_{1,3}(x)
&=\sup_{  k \in\mathbb{Z}}\Big|\sum_{\kappa\in\mathbb Z}\sum_{\substack{W\in \mathcal W_\kappa \\   k \leq \ell(W) \\ 2^k\delta\leq 2^{\ell(W)}}} b_{W}*_\delta\sigma_  k (x)\Big|.
\end{align*}
Since there is a constant $C<\infty$ such that $${\rm supp}\bigg( \sup_{  k :  k \leq \ell(W)}| b_W *_\delta\sigma_  k (x)|\bigg) \subset Dil_C\circ W^*,$$  then by the $L^2(\Rd)$ estimate of the lacunary $\delta$-discretized maximal function, we have
\begin{align}\label{T31}
\|Term_{3}\|_{L^1(\Rd)}&:=\bigg\| \sup_{  k \in\mathbb{Z}}\Big|\sum_{\kappa\in\mathbb Z}\sum_{\substack{W\in \mathcal W_\kappa\\   k \leq \ell(W) \\ 2^k\delta\leq 2^{\ell(W)}}} b_{W}*_\delta\sigma_  k (\cdot)\Big|\bigg\|_{L^1(\Rd)}\notag\\
&\leq \sum_{\kappa\in\mathbb Z}\sum_{W\in \mathcal W_\kappa}\bigg\| \sup_{  k \in\mathbb{Z}}\Big| b_{W}*_\delta\sigma_  k (\cdot)\Big|\bigg\|_{L^1(\Rd)}\notag\\
&\leq \sum_{\kappa\in\mathbb Z}\sum_{W\in \mathcal W_\kappa}|W^*|^{\frac{1}{2}}\cdot \big\|\sup_  k | b_W*_\delta\sigma_  k |\big\|_{L^2(\Rd)}\notag\\
&\lesssim \sum_{\kappa\in\mathbb Z}\sum_{W\in \mathcal W_\kappa}|W|^{\frac{1}{2}}\cdot \|b_W\|_{L^2(\Rd)}\nonumber\\
&\lesssim \|f\|_{H^1(\Rd)},
\end{align}
where in the last inequality we use \eqref{atom norm}, and in the last but second we use (1) in Theorem \ref{thm0}.

Combining all the estimates above for $Term_{1,j}$, $j=1,2,3$, (\ref{Term11}), (\ref{eeeeee}) and (\ref{T31}), we obtain that
\begin{align*}
 {\bf I}&\lesssim   \frac{1}{\lambda^2}\|Term_{1,1}\|^2_{L^2(\Rd)}+\left|\mathcal E_{\bf I}\right| +\frac{1}{\lambda}\|Term_{1,3}\|_{L^1(\Rd)}\lesssim   {1\over \lambda} \|f\|_{H^1(\Rd)},
\end{align*}
that is, \eqref{key thm0 1} holds.

\subsection{Estimate of {\bf II}}

We now consider {\bf II} as in \eqref{step1 in proof}.  
We aim to prove that 
  \begin{align}\label{key thm0 2}
 {\bf II} \lesssim\frac{1}{\lambda}\|f\|_{H^1(\Rd)}.
\end{align}

Define $\widetilde{\tau}(W)$ to be the smallest integer such that
\begin{align}\label{mini}
2^{\widetilde{\tau}(W)\cdot d} \cdot {\delta} \geq \frac{1}{\lambda} \cdot|W|^{\frac{1}{2}}\cdot \gamma_{W,\kappa},
\end{align}
where 
$\gamma_{W,\kappa}$ is given in \eqref{gammaW}.
%=\bigg
%(\sum_{j:j\leq \ell(W)}\sum_{R\in B^j_{\kappa}, R \subset W} \|e_{j,R}\|^2_{L^2(\Rd)}\bigg)^{\frac{1}{2}}.$$
Let $$\tau(W):=\max\Big\{\widetilde{\tau}(W),\,{\ell(W)
+\log_2\Big({1\over\delta}\Big)}\Big\}.$$

To prove \eqref{key thm0 2}, since $ 2^k\delta> 2^{\ell(W)}$, we decompose the 
the term in {\bf II}
 into two parts:
\begin{align*}
Term_{2,1}(x)&=\sup_{  k \in\mathbb{Z}}\Big|\sum_{\kappa\in\mathbb Z}\  \sum_{\substack{W\in\mathcal W_\kappa\\ \tau(W)<  k }}b_{W}*_\delta \sigma_  k (x)\Big|\, ,
\\
Term_{2,2}(x)&=\sup_{  k \in\mathbb{Z}}\Big|\sum_{\kappa\in\mathbb Z}\  \sum_{\substack{W\in\mathcal W_\kappa\\ \ell(W)+\log_2({1\over\delta})<  k \leq \tau(W)}}b_{W}*_\delta \sigma_  k (x)\Big|.
\end{align*}

Thus, for all $\lambda>0$, it suffices to estimate the level set of each terms, that is
\begin{align*}
\Big|\big\{x\in \Rd: Term_{2,j}(x)>\lambda\big\}\Big|,\qquad j=1,2.
\end{align*}

\subsubsection{$Term_{2,1}$: Almost Orthogonality and $L^2$-Boundedness}
Now, we consider $Term_{2,1}$. 
Following the same procedure as in $Term_{1,1}$, we first have
\begin{align*}
&\|Term_{2,1}\|^2_{L^2(\Rd)}\leq\bigg(\sum_{n\geq 0}\Big(\sum_{k\in\mathbb{Z}}\|I_{n,k}\|^2_{L^2(\Rd)}\Big)^{\frac{1}{2}}\bigg)^2,
\end{align*}
in which 
$$
I_{n,k}=\sum_{L\in\mathbb{Z}}\  \varphi_{L} *\bigg[ \sum_{\kappa\in\mathbb Z}\sum_{\substack{ W\in\mathcal W_\kappa\\ \tau(W)<k\\ \ell(W)=L+n}}\   \Big(\sum_{\substack{R\in B^{L}_{\kappa}\\R \subset W}} f_{L}(\cdot)\chi_{R}(\cdot)\Big)*\varphi_{L-n}*_\delta\sigma_{k}\bigg].
$$
We now estimate $\|I_{n,k}\|_{L^2(\Rd)}$ for each $n$ and $k$. By using Lemma \ref{ort}
 and Lemma \ref{lem ortho 2}, we have
\begin{align*}
&\|I_{n,k}\|_{L^2(\Rd)}
=\bigg\|\sum_{L\in\mathbb{Z}}\  \varphi_{L} *\bigg[ \sum_{\kappa\in\mathbb Z}\sum_{\substack{ W\in\mathcal W_\kappa\\ \tau(W)<k\\ \ell(W)=L+n}}\   \Big(\sum_{\substack{R\in B^{L}_{\kappa}\\R \subset W}} f_{L}(\cdot)\chi_{R}(\cdot)\Big)*\varphi_{L}*_\delta\sigma_{k}\bigg]\ \ \bigg\|_{L^2(\Rd)}\\
&\hskip-.55cm\underset{\text{Lemma}\,\ref{ort}}{\lesssim} \bigg(\sum_{L\in\mathbb{Z}}\bigg\| \ \sum_{\kappa\in\mathbb Z}\sum_{\substack{ W\in\mathcal W_\kappa\\ \tau(W)<k\\ \ell(W)=L+n}}\  \Big(\sum_{\substack{R\in B^{L}_{\kappa}\\ R \subset W}} f_{L}(\cdot)\chi_{R}(\cdot)\Big)*\varphi_{L}*_\delta\sigma_{k} \bigg\|^2_{L^2(\Rd)}\bigg)^{\frac{1}{2}}\\
&\leq \bigg(\sum_{L\in\mathbb{Z}}\|\varphi_{L}*_\delta\sigma_{k}\|^2_{L^2(\Rd) \to L^2(\Rd)}\cdot \bigg\|\ \ \sum_{\kappa\in\mathbb Z} \sum_{\substack{ W\in\mathcal W_\kappa\\\tau(W)<k\\ \ell(W)=L+n}}\  \Big(\sum_{\substack{R\in B^{L}_{\kappa}\\ R \subset W}} f_{L}(\cdot)\chi_{R}(\cdot)\Big)\bigg\|^2_{L^2(\Rd)}\bigg)^{\frac{1}{2}}\\
&=\bigg(\sum_{L\in\mathbb{Z}}\|\varphi_{L-k}*_\delta\sigma\|^2_{L^2(\Rd) \to L^2(\Rd)}\cdot \bigg\|\ \  \sum_{\kappa\in\mathbb Z}\sum_{\substack{ W\in\mathcal W_\kappa\\\tau(W)<k\\ \ell(W)=L+n}}\  \Big(\sum_{\substack{R\in B^{L}_{\kappa}\\ R \subset W}} f_{L}(\cdot)\chi_{R}(\cdot)\Big)\bigg\|^2_{L^2(\Rd)}\bigg)^{\frac{1}{2}}\\
&\lesssim  \bigg(\sum_{L\in\mathbb{Z}} \min\{1,\, {\delta^{-1}} 2^{(L-k)d}\} \cdot\bigg\|\ \ \sum_{\kappa\in\mathbb Z}\sum_{\substack{ W\in\mathcal W_\kappa\\ \tau(W)<k\\ \ell(W)=L+n}}  \Big(\sum_{\substack{R\in B^{L}_{\kappa}\\ R \subset W}} f_{L}(\cdot)\chi_{R}(\cdot)\Big)\bigg\|^2_{L^2(\Rd)}\bigg)^{\frac{1}{2}}\\
&\leq {\delta^{-{1\over2}}} \bigg(\sum_{\substack{L\in\mathbb{Z}\\L<k}} 2^{(L-k)d}\cdot\bigg\|\ \ \sum_{\kappa\in\mathbb Z}\sum_{\substack{ W\in\mathcal W_\kappa\\ \tau(W)<k\\ \ell(W)=L+n}}  \Big(\sum_{\substack{R\in B^{L}_{\kappa}\\ R \subset W}}f_{L}(\cdot)\chi_{R}(\cdot)\Big)\bigg\|^2_{L^2(\Rd)}\bigg)^{\frac{1}{2}}\\
&\qquad+ \bigg(\sum_{\substack{L\in\mathbb{Z}\\L\geq k}}\bigg\|\ \ \sum_{\kappa\in\mathbb Z}\sum_{\substack{ W\in\mathcal W_\kappa\\ \tau(W)<k\\ \ell(W)=L+n}}  \Big(\sum_{\substack{R\in B^{L}_{\kappa}\\ R \subset W}} f_{L}(\cdot)\chi_{R}(\cdot)\Big)\bigg\|^2_{L^2(\Rd)}\bigg)^{\frac{1}{2}}\\
&\leq {\delta^{-{1\over2}}} \bigg(\sum_{\substack{L\in\mathbb{Z}\\ L<k}} 2^{(L-k)d}\cdot\bigg\|\ \ \sum_{\kappa\in\mathbb Z}\sum_{\substack{ W\in\mathcal W_\kappa\\ \tau(W)<k\\ \ell(W)=L+n}}  \Big(\sum_{\substack{R\in B^{L}_{\kappa}\\ R \subset W}} f_{L}(\cdot)\chi_{R}(\cdot)\Big)\bigg\|^2_{L^2(\Rd)}\bigg)^{\frac{1}{2}}\\
&\qquad+ \bigg(\sum_{\substack{L\in\mathbb{Z}\\ L\geq k}}\bigg\|\ \ \sum_{\kappa\in\mathbb Z}\sum_{\substack{ W\in\mathcal W_\kappa\\ \tau(W)<k\\ \ell(W)=L+n}}  \Big(\sum_{\substack{R\in B^{L}_{\kappa}\\ R \subset W}} f_{L}(\cdot)\chi_{R}(\cdot)\Big)\bigg\|^2_{L^2(\Rd)}\bigg)^{\frac{1}{2}}\\
&\lesssim {\delta^{-{1\over2}}} \bigg(\sum_{\substack{L\in\mathbb{Z}\\ L<k}} 2^{(L-k)d}\cdot\bigg\|\ \ \sum_{\kappa\in\mathbb Z}\sum_{\substack{ W\in\mathcal W_\kappa\\ \tau(W)<k\\ \ell(W)=L+n}}  \Big(\sum_{\substack{R\in B^{L}_{\kappa}\\ R \subset W}} f_{L}(\cdot)\chi_{R}(\cdot)\Big)\bigg\|^2_{L^2(\Rd)}\bigg)^{\frac{1}{2}}\\
&\qquad+ \bigg(\sum_{\substack{L\in\mathbb{Z}\\ L\geq k}}\bigg\|\ \ \sum_{\kappa\in\mathbb Z}\sum_{\substack{ W\in\mathcal W_\kappa\\ \tau(W)<k\\ \ell(W)=L+n}}  \Big(\sum_{\substack{R\in B^{L}_{\kappa}\\ R \subset W}} f_{L}(\cdot)\chi_{R}(\cdot)\Big)\bigg\|^2_{L^2(\Rd)}\bigg)^{\frac{1}{2}}.
\end{align*}
We first note that the second term on the right-hand side of the last inequality above does not exists, since it implies that  $\ell(W)>n+k>n+\tau(W)$, which contradicts to the definition of $\tau(W)$. We now substitute this upper bound of $\|I_{n,k}\|_{L^2(\Rd)}$ back to the estimate of $Term_{2,1}$ above. Then it yields that
\begin{align*}
&\bigg(\sum_{k\in\mathbb{Z}}\|I_{n,k}\|^2_{L^2(\Rd)}\bigg)^{\frac{1}{2}}\\
&\lesssim {\delta^{-{1\over2}}}\bigg(\sum_{k\in\mathbb{Z}} \sum_{\substack{L\in\mathbb{Z}\\ L<k}}\ \ \sum_{\kappa\in\mathbb Z}\sum_{\substack{ W\in\mathcal W_\kappa\\ \tau(W)<k\\ \ell(W)=L+n}}  2^{(L-k)d}\cdot\bigg\|\sum_{\substack{R\in B^{L}_{\kappa}\\ R \subset W}}f_{L}(\cdot)\chi_{R}(\cdot)\bigg\|^2_{L^2(\Rd)}\bigg)^{\frac{1}{2}}
\\
&={\delta^{-{1\over2}}}2^{-{nd\over2}}\cdot\bigg( \sum_{\kappa\in\mathbb Z}\sum_{W \in\mathcal W_\kappa}\sum_{\substack{k:k>\tau(W)\\\ell(W)<n+k}}2^{(\ell(W)-k)d}\cdot\bigg\|\sum_{\substack{R\in B^{\ell(W)-n}_{\kappa}\\R \subset W}} f_{\ell(W)-n}(\cdot)\chi_{R}(\cdot)\bigg\|^2_{L^2(\Rd)}\bigg)^{\frac{1}{2}}\\
&\lesssim {\delta^{-{1\over2}}}2^{-{nd\over2}}\cdot\bigg(\sum_{\kappa\in\mathbb Z}\sum_{W \in\mathcal W_\kappa}2^{(\ell(W)-\tau(W))d}\cdot \bigg\|\sum_{\substack{R\in B^{\ell(W)-n}_{\kappa}\\R \subset W}} f_{\ell(W)-n}(\cdot)\chi_{R}(\cdot)\bigg\|^2_{L^2(\Rd)}\bigg)^{\frac{1}{2}}\\
&\lesssim {\delta^{-{1\over2}}} 2^{-{nd\over2}}\cdot\bigg(\sum_{\kappa\in\mathbb Z}\sum_{W \in\mathcal W_\kappa}2^{(\ell(W)-\tau(W))d}\cdot \gamma_{W,\kappa}^2\bigg)^{\frac{1}{2}}.
\end{align*}
Since $\tau(W)\geq \widetilde{\tau}(W)$ and
\begin{align*}
2^{\widetilde{\tau}(W)\cdot d} \cdot {\delta} \geq \frac{1}{\lambda} \cdot|W|^{\frac{1}{2}}\cdot \gamma_{W,\kappa},
\end{align*}
then we have $$\gamma_{W,\kappa}\leq 2^{\widetilde{\tau}(W)\cdot d} \cdot {\delta} \cdot\lambda \cdot |W|^{-{1\over2}} \leq 2^{\tau(W)\cdot d} \cdot {\delta} \cdot\lambda \cdot |W|^{-{1\over2}}  .$$ 
Hence, one has
\begin{align*}
&\bigg(\sum_{k\in\mathbb{Z}}\|I_{n,k}\|^2_{L^2(\Rd)}\bigg)^{\frac{1}{2}}\\
&\lesssim {\delta^{-{1\over2}}} 2^{-{nd\over2}}\cdot\bigg(\sum_{\kappa\in\mathbb Z}\sum_{W \in\mathcal W_\kappa}2^{(\ell(W)-\tau(W))d}\cdot  2^{\tau(W)\cdot d} \cdot {\delta} \cdot\lambda \cdot |W|^{-{1\over2}} \cdot \gamma_{W,\kappa} \bigg)^{\frac{1}{2}}
\\\
&\lesssim   2^{-{nd\over2}}\cdot\bigg(\sum_{\kappa\in\mathbb Z}\sum_{W \in\mathcal W_\kappa} \lambda  |W|^{1\over2}\gamma_{W,\kappa}\bigg)^{\frac{1}{2}},
 \end{align*}

which leads to the desired estimate
\begin{align}\label{Term21}
\frac{1}{\lambda^2}\|Term_{2,1}\|^2_{L^2(\Rd)}&\lesssim\frac{1}{\lambda^2}\bigg(\sum_{n\geq 0}
2^{-{nd\over2}}\cdot\bigg(\sum_{\kappa\in\mathbb Z}\sum_{W \in\mathcal W_\kappa} \lambda  |W|^{1\over2}\gamma_{W,\kappa}\bigg)^{\frac{1}{2}}\bigg)^2 \nonumber\\
&=\frac{1 }{\lambda^2}\cdot\lambda \sum_{\kappa\in\mathbb Z}\sum_{W \in\mathcal W_\kappa}\gamma_{W,\kappa}|W|^{\frac{1}{2}}\nonumber\\
&\lesssim \frac{1}{\lambda}\|f\|_{H^1(\Rd)},
\end{align}
where the last inequality follows from \eqref{atom norm gammaW} in the proof of Theorem \ref{thm1.2}. 

\subsubsection{$Term_{2,2}$: Exceptional Set}

We consider 
\begin{align}\label{T2 level set}
&\Big|\big\{x\in \Rd: Term_{2,2}(x)>\lambda\big\}\Big|\nonumber\\
&=\bigg|\bigg\{x\in \Rd: \sup_{  k \in\mathbb{Z}}\Big|\sum_{\kappa\in\mathbb Z}\  \sum_{\substack{W\in\mathcal W_\kappa\\ \ell(W)+\log_2({1\over\delta})<  k \leq \tau(W)}}b_{W}*_\delta \sigma_  k (x)\Big|>\lambda\bigg\}\bigg|.
\end{align}
Recall that  $b_{W}$ is supported in 20 times enlargement of $W$, denoted by $W^*$.
Hence, the function
$$\sup_{k\in\mathbb{Z}}\Big|\sum_{\kappa\in\mathbb Z}\  \sum_{\substack{W\in\mathcal W_\kappa\\  \ell(W)+\log_2({1\over\delta})<k\leq \tau(W)}}b_{W}*_\delta\sigma_{k}(x)\Big|$$ 
is supported in the set 
$$
\mathcal{E}_{\bf II}:=\bigcup_{\kappa\in\mathbb Z}\  \bigcup_{W\in\mathcal W_\kappa}\bigcup_{k: \ell(W)+\log_2({1\over\delta})<k\leq\tau(W)}\left(W^*+_\delta \supp(\sigma_k)\right)
$$
and also for all $k$ with $\ell(W)+\log_2({1\over\delta})<k\leq\tau(W)$
\begin{align*}
  \left|W^*+_\delta \supp(\sigma_k)\right|
 \lesssim 2^{kd}\cdot \delta ,
\end{align*}
Then we have
\begin{align*}
\left|\mathcal{E}_{\bf II}\right|
&\lesssim\sum_{\kappa\in\mathbb Z}\  \sum_{W\in\mathcal W_\kappa}\sum_{k:  \ell(W)+\log_2({1\over\delta})< k<\tau(W)} 2^{kd}\cdot \delta \\
&\lesssim 2^d\cdot\sum_{\kappa\in\mathbb Z}\  \sum_{\substack{W\in\mathcal W_\kappa\\ \ell(W)+\log_2(\frac{1}{\delta})< \tau(W)}} 2^{(\tau(W)-1)d}\cdot \delta.
\end{align*}
Again, by the definition of $\tau(W)$, (stopping time argument via \eqref{mini}) and \eqref{atom norm gammaW}, we  deduce that
\begin{align*}
\left|\mathcal{E}_{\bf II}\right|
&\lesssim \sum_{\kappa\in\mathbb Z}\  \sum_{\substack{W\in\mathcal W_\kappa\\ \ell(W)< \tau(W)}}\frac{1}{\lambda} \cdot|W|^{\frac{1}{2}}\cdot \gamma_{W,\kappa}\\
&\lesssim \frac{1}{\lambda} \cdot\|f\|_{H^1(\Rd)}.
\end{align*}
Thus, based on \eqref{T2 level set} and the above estimates, we have
\begin{align*}
\Big|\big\{x\in \Rd: Term_{2,2}(x)>\lambda\big\}\Big| \lesssim \frac{1}{\lambda} \cdot\|f\|_{H^1(\Rd)}.
\end{align*}

\bigskip

Combining all the estimates above for $Term_{2,j}$, $j=1,2$,  we obtain that
\begin{align*}
 {\bf II}&\lesssim   \frac{1}{\lambda^2}\|Term_{2,1}\|^2_{L^2(\Rd)}+\left|\mathcal E_{\bf II}\right| \lesssim   {1\over \lambda} \|f\|_{H^1(\Rd)}.
\end{align*}
Hence, \eqref{key thm0 2} holds.

\subsection{Back to the Estimate of \eqref{key thm0}}

Combining \eqref{step1 in proof} and all the estimates in \eqref{key thm0 1} and \eqref{key thm0 2},  we obtain that
\begin{align*}
 |\{x\in \Rd: M_{lac}^\delta(f)(x)>\lambda\}|
  &\lesssim   {1\over \lambda} \|f\|_{H^1(\Rd)}.
\end{align*}
The proof of $(2)$ in Theorem \ref{thm0} is complete.

\section{The $L^p$ $(1<p<\infty)$ Estimate:  Proof of Theorem \ref{thm1}}\label{s:5}
\setcounter{equation}{0}

To show theorem \ref{thm1}, we first introduce the multi-scale decomposition that will be used.

{\it Multi-scale Decomposition}: Let $\psi,\Psi\in\mathcal{S}(\Rd)$ be  Schwartz functions with the Fourier support  \begin{align*}
\supp(\widehat{\psi})\subset\left\{\xi\in\Rd:\,|\xi|\leq2\right\}\quad\text{and}\quad\supp(\widehat{\Psi})\subset\left\{\xi\in\Rd:\,\frac{1}{2}\leq|\xi|\leq2\right\}
\end{align*} 
such that 
\begin{align}\label{identity}
\widehat{\psi}(\xi)+\sum_{j=1}^\infty\widehat{\Psi_{-j}}(\xi)=1,\quad \forall \,\xi\neq0,
\end{align}
where the dilation is defined to be as in (\ref{dilation j}). 
For a vector $\vec{j}=(j_1,\dots,j_d)\in\mathbb{Z}^d$ and $y=(y_1,\dots,y_d)\in\Rd$, we define
\[\widehat{\psi}_{\vec{j}}(y):=\widehat{\psi}(2^{j_1}y_1)\cdots\widehat{\psi}(2^{j_d}y_d)\quad\text{and}\quad\widehat{\Psi}_{\vec{j}}(y):=\widehat{\Psi}(2^{j_1}y_1)\cdots\widehat{\Psi}(2^{j_d}y_d).\]
Using the identity \eqref{identity}, we can write
\begin{align*}
	f *_\delta \sigma_{\vec{k}} = \sum_{\substack{\vec{j}\in\mathbb{Z}^d:\\j_1,\dots,j_d\geq0}}A_{\vec{j}}^{\vec{k}}f(x),
\end{align*}
where
$$A_{\vec{0}}f(x)=f *\psi_{\vec{k}}*_\delta\sigma_{\vec{k}}(x)\quad\text{and}\quad \A_{\vec{j}}^{\vec{k}}f(x)=f *\Psi_{\vec{k}-\vec{j}}*_\delta \sigma_{\vec{k}}(x).$$

To prove the $L^p$-bound of the $\delta$-discretized Lacunary maximal function, it suffices to study boundedness property of the following operator
$M_{\vec{j}}f(x):=\sup_{\vec{k}\in \mathbb Z^d}|A_{\vec{j}}^{\vec{k}}f(x)|$.
Besides, by Young's inequality and the Poisson bound of the Schwartz function $\Psi$ and the interpolation theorem, it is direct to see that $A_{\vec{j}}^{\vec{k}}$ maps $L^p(\Rd)$ to $L^p(\Rd)$ for all $p\geq1$ and the $L^p$-norm of $A_{\vec{j}}^{\vec{k}}$ is independent of $\delta$. We also have $M_{\vec{0}}f(x)\lesssim M_{st}f(x)$ (see for example section $3$ in \cite{LLO} ),
where $M_{st}$ is the standard strong maximal function, given by 
$$ M_{st}(f)(x)= \sup_{R\ni x} {1\over |R|}\int_{R} |f(y)|dy $$
with $R$ running through all rectangles in $\Rd$ whose sides are parallel to the coordinates. The $L^p$-mapping property of the strong maximal function $M_{st}$ gives that  $M_{\vec{0}}$ is bounded on $L^p(\Rd)$ for all $p>1$.

For a non-trivial vector $\vec{j}$, we will first prove $L^2$-estimate of $M_{\vec{j}}$ with certain decay and use a bootstrapping argument to complete the proof of Theorem \ref{thm1}.
\begin{lemma}[$L^2$-Estimate]\label{l2}~\\
	Let $\vec{j}$ be a vector such that each component is non-negative such that $\vec{j}\neq\vec{0}$, then
	\[\|M_{\vec{j}}f\|_{L^2(\Rd)}\lesssim2^{-\max\{j_1,\dots,j_d\}{{d-1}\over2}}\|f\|_{L^2(\Rd)}.\]
\end{lemma}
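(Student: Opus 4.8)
The idea is to reduce the supremum over $\vec k\in\mathbb Z^d$ of $|A_{\vec j}^{\vec k}f|$ to a square function, and then exploit the Fourier-decay estimate of Lemma~\ref{lem ortho 2}(2) in each coordinate direction. Write $m=\max\{j_1,\dots,j_d\}$. Since $A_{\vec j}^{\vec k}f = f*\Psi_{\vec k-\vec j}*_\delta\sigma_{\vec k}$ and $\Psi$ has a derivative that is again Schwartz with cancellation, a standard trick lets one dominate the supremum pointwise: for a fixed dilation parameter the operator $A_{\vec j}^{\vec k}$ is smooth enough in $\vec k$ that
\[
\sup_{\vec k\in\mathbb Z^d}|A_{\vec j}^{\vec k}f(x)|^2
\;\lesssim\; \sum_{\vec k\in\mathbb Z^d}|A_{\vec j}^{\vec k}f(x)|\cdot|\widetilde A_{\vec j}^{\vec k}f(x)|,
\]
where $\widetilde A_{\vec j}^{\vec k}$ is built from $\Psi'$ in place of $\Psi$ (the fundamental-theorem-of-calculus argument that reduces $\sup_k|g_k|^2$ to $\sum_k|g_k||g_k'|$; cf.\ the usual square-function bound for lacunary maximal functions). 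By Cauchy--Schwarz in $\vec k$ it then suffices to bound the $L^2$ norm of the square function $\bigl(\sum_{\vec k}|A_{\vec j}^{\vec k}f|^2\bigr)^{1/2}$ and its tilded analogue.

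Next I would compute the $L^2$ norm of the square function by Plancherel. Since the Fourier supports of $\Psi_{\vec k-\vec j}$ for distinct $\vec k$ overlap only boundedly (each $\widehat\Psi(2^{k_i-j_i}\xi_i)$ is supported where $|\xi_i|\approx 2^{j_i-k_i}$), we get
\[
\Bigl\|\bigl(\textstyle\sum_{\vec k}|A_{\vec j}^{\vec k}f|^2\bigr)^{1/2}\Bigr\|_{L^2(\Rd)}^2
\;\lesssim\; \sum_{\vec k\in\mathbb Z^d}\int_{\Rd}\bigl|\widehat f(\xi)\bigr|^2\,\bigl|\widehat\Psi_{\vec k-\vec j}(\xi)\bigr|^2\,\Bigl|\tfrac{\widehat{\chi_{C^\delta(0,1)}}}{|C^\delta(0,1)|}(2^{\vec k}\xi)\Bigr|^2\,d\xi.
\]
On the support of $\widehat\Psi_{\vec k-\vec j}$ one has $|2^{k_i}\xi_i|\approx 2^{j_i}$ for each $i$; taking the coordinate $i_0$ achieving the max, $|2^{\vec k}\xi|\gtrsim 2^{j_{i_0}}=2^{m}$. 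Plugging this into the non-$\delta$-dependent Fourier-decay bound $|\widehat{\chi_{C^\delta(0,1)}}(\eta)/|C^\delta(0,1)||\lesssim (1+|\eta|)^{-(d-1)/2}$ from \eqref{estimate1} gives a factor $2^{-m(d-1)/2}$, uniformly in $\delta$; summing the bounded-overlap pieces recovers $\|f\|_{L^2}^2$. Combining with the pointwise square-function reduction yields $\|M_{\vec j}f\|_{L^2}\lesssim 2^{-m(d-1)/2}\|f\|_{L^2}$.

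The main obstacle is making the passage from the supremum over the \emph{discrete} set $\vec k\in\mathbb Z^d$ to a square function rigorous while keeping the constant independent of $\delta$: one must check that the ``derivative'' operator $\widetilde A_{\vec j}^{\vec k}$ (differentiating the Schwartz multiplier, not the singular measure) still enjoys the same Fourier-decay estimate with the same uniformity in $\delta$. Since the $\delta$-neighbourhood enters only through the bounded multiplier $\widehat{\chi_{C^\delta(0,1)}}/|C^\delta(0,1)|$, which we never differentiate, this is harmless: the extra $\Psi'$ factor only changes the Schwartz bump and preserves both the bounded-overlap property of the Fourier supports and the localization $|2^{\vec k}\xi|\approx 2^{\vec j}$. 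A secondary technical point is the bounded-overlap count in the Littlewood--Paley decomposition in $d$ variables, but this is routine and contributes only a dimensional constant. With these in hand the proof is complete.
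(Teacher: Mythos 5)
Your proof is essentially the paper's: bound the discrete supremum by the square function $\bigl(\sum_{\vec k}|A_{\vec j}^{\vec k}f|^2\bigr)^{1/2}$, apply Plancherel together with the $\delta$-uniform decay bound \eqref{estimate1} (noting $|2^{\vec k}\xi|\approx 2^{\max_i j_i}$ on the support of $\widehat\Psi_{\vec k-\vec j}$), and sum back using the bounded overlap of the Fourier supports, i.e.\ the multi-parameter Littlewood--Paley inequality. The one flawed piece is your opening ``fundamental theorem of calculus'' reduction with the derivative operator $\widetilde A_{\vec j}^{\vec k}$: the supremum here is over the discrete set $\vec k\in\mathbb Z^d$, so there is no continuous parameter to differentiate and that displayed inequality is neither meaningful nor needed; the trivial embedding $\sup_{\vec k}|A_{\vec j}^{\vec k}f(x)|^2\le\sum_{\vec k}|A_{\vec j}^{\vec k}f(x)|^2$ (which is what the paper uses) already gives the square function, after which the rest of your argument goes through unchanged.
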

\begin{proof}
	By the $\ell_{\infty}\hookrightarrow\ell_2$ embedding, we have
$$M_{\vec{j}}f(x)\leq\left(\sum_{\vec{k}\in\mathbb{Z}^d}|A_{\vec{j}}^{\vec{k}}f(x)|^2\right)^{1\over2},$$
which leads to
\begin{align}\label{square}
\|M_{\vec{j}}f\|_{L^2(\Rd)}^2\leq\sum_{\vec{k}\in\mathbb{Z}^d}\|A_{\vec{j}}^{\vec{k}}f\|_{L^2(\Rd)}^2.
\end{align}
	Due to dilation invariance, it is enough to estimate for $A_{\vec{j}}^{\vec{0}}$. Let $\Sigma$ be the measure induced by $*_\delta\sigma$. Using the Plancherel theorem, the support condition of $\widehat{\Psi_{-\vec{j}}}$ and the Fourier decay of measure $\Sigma$ (\ref{estimate1}), we get that
\begin{align*}
\|A_{\vec{j}}^{\vec{0}}f\|_{L^2(\Rd)}=\left\|\widehat{\left(f*\Psi_{-\vec{j}}\right)}(\xi)\cdot\widehat{\Sigma}(\xi)\right\|_{L^2(\Rd)}&\lesssim{2^{-\max\{j_1,\dots,j_d\}{{d-1}\over2}}\|f*\Psi_{-\vec{j}}\|_{L^2(\Rd)}}.
\end{align*}

Therefore, (\ref{square}) leads to
	\begin{align*}
		\|M_{\vec{j}}f\|_{L^2(\Rd)}&\lesssim\left(\sum_{\vec{k}\in\mathbb{Z}^d}{2^{-\max\{j_1,\dots,j_d\}{(d-1)}}}\|f*\Psi_{\vec{k}-\vec{j}}\|_{L^2(\Rd)}^2\right)^{1\over2}\\
		&\lesssim {2^{-\max\{j_1,\dots,j_d\}{{d-1}\over2}}}\|f\|_{L^2(\Rd)},
	\end{align*}
	where we used the multi-parameter Littlewood--Paley inequality in the last inequality.
\end{proof}
Now, we will show the Bootstraping argument. Consider the following vector-valued operator $\vec{\bold{A}}_{\vec{j}}$ acting on a sequence of measurable functions $\bold{f}=\{f*\Psi_{\vec{k}-\vec{j}}\}_{\vec{k}\in\mathbb{Z}^d}$,
\[\vec{\bold{A}}_{\vec{j}}\bold{f}=\{A_{\vec{j}}^{\vec{k}}(f*\Psi_{\vec{k}-\vec{j}})\}_{\vec{k}\in\mathbb{Z}^d}.\]
We can extend the $L^1$-boundedness of $A_{\vec{j}}^{\vec{k}}$ to the vector-valued setting
\[\|\vec{\bold{A}}_{\vec{j}}\|_{L^1(\ell_1)\to L^1(\ell_1)}\lesssim1.\]
Next, using the $L^2$-boundedness of $M_{\vec{j}}$, we get
\begin{align*}
	\|\vec{\bold{A}}_{\vec{j}}\bold{f}\|_{L^2(\ell_\infty)}
	&= \left\|\sup_{\vec{k}\in\mathbb{Z}^d}|A_{\vec{j}}^{\vec{k}}(\bold{f})|\right\|_{L^2}\\&\leq \left\|M_{\vec{j}}(\sup_{\vec{m}}|f*\Psi_{\vec{m}-\vec{j}}|)\right\|_{L^2}\\
	&\lesssim {2^{-\max\{j_1,\dots,j_d\}{{d-1}\over2}}}\|\bold{f}\|_{L^2(\ell_\infty)}.
\end{align*}
Thus, interpolating ${L^1(\ell_1)}$ and ${L^2(\ell_\infty)}$ bounds of $\vec{\bold{A}}_{\vec{j}}$, we get that
\[\|\vec{\bold{A}}_{\vec{j}}\bold{f}\|_{L^{4\over3}(\ell_2)}\lesssim{2^{-\max\{j_1,\dots,j_d\}{{d-1}\over4}}}\|\bold{f}\|_{L^{4\over3}(\ell_2)}.\]
Again using the $\ell_{\infty}\hookrightarrow\ell_2$ embedding and above estimate, one can derive that
\begin{align*}
	\|M_{\vec{j}}f\|_{L^{4\over3}(\Rd)}&\leq \left\|\left(\sum_{\vec{k}}|A_{\vec{j}}^{\vec{k}}f|^2\right)^{1\over2}\right\|_{L^{4\over3}(\Rd)}\\
	&\lesssim {2^{-\max\{j_1,\dots,j_d\}{{d-1}\over4}}}\left\|\left(\sum_{\vec{k}}|f*\Psi_{\vec{k}-\vec{j}}|^2\right)^{1\over2}\right\|_{L^{4\over3}(\Rd)}\\
	&\lesssim {2^{-\max\{j_1,\dots,j_d\}{{d-1}\over4}}}\|f\|_{L^{4\over3}(\Rd)},
\end{align*}
where we used multi-parameter Littlewood--Paley inequality in the last step. Using the similar argument inductively, we get
\begin{align*}
	\|M_{\vec{j}}f\|_{L^p(\Rd)}\lesssim {2^{-\max\{j_1,\dots,j_d\}\epsilon}}\|f\|_{L^p(\Rd)},
\end{align*}
for all $p>1$ and some $\epsilon>0$. Remark that for the case $p>2$, we can interpolation with the trivial $L^\infty$-bound. Now summing in $\vec{j}$ completes the proof
of Theorem \ref{thm1}.

\bigskip
\bigskip
\bigskip

{\bf Acknowledgments:}  J. Li is supported by ARC DP 220100285. C.-W. Liang and C.-Y. Shen are supported in part by NSTC through grant 111-2115-M-002-010-MY5. 
C.-W. Liang is also supported by MQ Cotutelle PhD scholarhsip. S. S. Choudhary is supported by National Center for Theoretical Sciences, National Taiwan University. Ji Li and Chong-Wei Liang would like to thank Professor Sanghyuk Lee for discussions on the spherical average. 
\bigskip

%\widetilde{\psi}-->\varphi
%\widetilde\widetilde{\psi}-->\xi

\bigskip

(S. S. Choudhary), National Center for Theoretical Sciences, No. 1 Sec. 4 Roosevelt Rd., National
Taiwan University, Taipei, 106, Taiwan.\\
{\it E-mail}: \texttt{surjeet19@ncts.ntu.edu.tw}\\

(J. Li) School of Mathematical and Physical Sciences, Macquarie University, NSW, 2109, Australia.\\ 
{\it E-mail}: \texttt{ji.li@mq.edu.au}\\

(C.-W. Liang) School of Mathematical and Physical Sciences, Macquarie University, NSW, 2109, Australia; and  Department of Mathematics, National Taiwan University, Taiwan.
\\ 
{\it E-mail}: \texttt{chongwei.liang@students.mq.edu.au;\ \ d10221001@ntu.edu.tw}\\
 
(C.-Y. Shen) Department of Mathematics, National Taiwan University, Taiwan.
\\ 
{\it E-mail}: \texttt{cyshen@ntu.edu.tw}\\

\vspace{0.3cm}

\end{document}